\pgfplotsset{compat=1.18}
\definecolor{uuuuuu}{rgb}{0.27,0.27,0.27}
\definecolor{sqsqsq}{rgb}{0.1255,0.1255,0.1255}
\newtheorem{definition}{Definition} [section]
\newtheorem{theorem}[definition]{Theorem}
\newtheorem{lemma}[definition]{Lemma}
\newtheorem{claim}[definition]{Claim}
\newcommand{\uproduct}{\mathbin{\;{\rotatebox{90}{\textnormal{$\small\Bowtie$}}}}}
\begin{document}
\title{\bf\Large Counting triangles in graphs without vertex disjoint  odd cycles}
\date{ }
\author[1]{Jianfeng Hou\thanks{Research was supported by National Natural Science Foundation of China (Grant No. 12071077). Email: jfhou@fzu.edu.cn}}
\author[1]{Caihong Yang\thanks{Email: chyang.fzu@gmail.com}}
\author[1]{Qinghou Zeng\thanks{Research was supported by National Natural Science Foundation of China (Grant No. 12001106, 12371342) and National Natural Science Foundation of Fujian Province (Grant No. 2021J05128). Email: zengqh@fzu.edu.cn}}
\affil[1]{Center for Discrete Mathematics,
            Fuzhou University, Fujian, 350003, China}


\maketitle
\begin{abstract}
Given two graphs $H$ and $F$, the maximum possible number of copies of $H$ in an $F$-free graph on $n$ vertices is denoted by $\mathrm{ex}(n, H, F)$. Let $(\ell+1) \cdot F$ denote $\ell+1$ vertex disjoint copies of $F$. In this paper, we determine the exact value of $\mathrm{ex}(n, C_3, (\ell+1)\cdot C_{2k+1})$ and its extremal graph, which generalizes some known results.

\medskip

\textbf{Keywords:} generalized Tur\'{a}n number, extremal graph, cycle
\end{abstract}
\section{Introduction}\label{SEC:Introduction}

Given a graph $F$ we say a graph $G$ is $F$-free if it does not contain $F$ as a subgraph. The {\em Tur\'{a}n number} $\mathrm{ex}(n,F)$ of $F$ is the maximum
number of edges in an $F$-free graph on $n$ vertices.  The study of $\mathrm{ex}(n,F)$ is perhaps the central topic in extremal graph,
and one of the most famous results in this regard is Tur\'{a}n's theorem \cite{41Turan}, which states that for every integer $r \ge 2$ the Tur\'{a}n number $\mathrm{ex}(n,K_{r+1})$ is uniquely achieved by the complete balanced $r$-partite graph on $n$ vertices, which is called the \emph{Tur\'{a}n graph} $T_{r}(n)$. Starting from this, the Tur\'{a}n problem has attracted much attention. For more problems and results in this area, we refer the reader to \cite{BJ17, Er62, FG15, HHLYZ23, Moon68}.

A generalization of Tur\'{a}n numbers was introduced by Alon and Shikhelman \cite{AS16}. Given two graphs $T$ and $F$, the {\em generalized Tur\'{a}n number} of $T$ and $F$, denoted by $\mathrm{ex}(n, T, F)$, is the maximum possible number of copies of $T$ in an $F$-free graph on $n$ vertices. We use $\mathrm{EX}(n,T, F)$ to denote the collection of all $n$-vertex $F$-free graphs with exactly $\mathrm{ex}(n, T, F)$ copies of $T$. This topic was first studied by Erd\H{o}s \cite{E62}, who determined $\mathrm{ex}(n, K_t, K_r)$ for all $t<r$. For this kind of problem, several authors (e.g., see \cite{FO17, G2023, GGMV20, liu2023, MQ2020, ZC22, ZC23}) have given many nice proofs for generalized Tur\'{a}n problem.

A typical example is to determine $\mathrm{ex}(n, T, F)$ when both $T$ and $F$ are odd cycles. For $k\ge 3$, let $C_k$ denote a cycle on $k$ vertices. In 1984, Erd\H{o}s \cite{E84} conjectured that $\mathrm{ex}(n, C_5, C_3) \leq (n/5)^5$. Note that when $n$ is divisible by $5$, the balanced blow-up of a $5$-cycle shows that $\mathrm{ex}(n, C_5, C_3) \geq (n/5)^5$. Using the method of flag algebras, Erd\H{o}s conjecture was proved by Grzesik \cite{G12} and independently by Hatami, Hladk\'{y}, Kr\'{a}l', Norine and Razborov \cite{HHK13}.

Motivated by Erd\H{o}s conjecture \cite{E84}, Bollobós and Gy\H{o}ri \cite{BG08} initiated the study of the natural converse of this problem by proving
\[
(1+o(1))\frac{n^{3/2}}{3\sqrt{3}} \leq \mathrm{ex}(n, C_3, C_5) \leq (1+o(1))\frac{5}{4}n^{3/2}.
\]
Gy\H{o}ri and Li \cite{GL13} obtained results bounding $\mathrm{ex}(n, C_3, C_{2k+1})$ which were improved by Alon and Shikhelman \cite{AS16}.

In this paper, we count the number of triangles in graphs without vertex disjoint copies of odd cycles. For an integer $\ell \ge 1$,  let $\ell \cdot F$ denote $\ell$ vertex disjoint copies of $F$.  Given two graphs $G$ and $H$ whose vertex sets are disjoint, we define the \emph{join} $G \uproduct H$ of $G$ and $H$ to be the graph obtained from $G \sqcup H$ (the vertex-disjoint union of $G$ and $H$) by adding all edges that have nonempty intersection with both $V(G)$ and $V(H)$. Obviously,  $K_{\ell} \uproduct T_{r}(n-\ell)$ is $(\ell+1)\cdot K_{r+1}$-free. Erd\H{o}s~\cite{Er62} showed that if $n$ is sufficiently large, then $K_{\ell} \uproduct T_{2}(n-\ell)$  is the unique extremal graph for $(\ell+1)\cdot K_{3}$. It was extended to $(\ell+1)\cdot K_{r}$ for all $r \ge 3$ by Moon~\cite{Moon68}. For general graph $F$, Gorgol showed that $\mathrm{ex}(n, \ell \cdot F)=\mathrm{ex}(n, F)+O(n)$. Recently, Hou, Li, Liu, Yuan and Zhang\cite{HHLYZ23} present a general approach for determining $\mathrm{ex}(n, \ell \cdot F)$ especially when $F$ is a  hypergraph.

For an integer $k\ge 2$, $K_{\ell} \uproduct T_{2}(n-\ell)$ is $(\ell+1)\cdot C_{2k+1}$-free. Counting triangles in such a graph yields that  $\mathrm{ex}(n, C_3, (\ell+1)\cdot C_{2k+1})=\Omega(n^2)$. Thus, unlike the case of ordinary Tur\'{a}n problems, $\mathrm{ex}(n, C_3, C_{2k+1})$ and $\mathrm{ex}(n, C_3, (\ell+1)\cdot C_{2k+1})$ can have different order of magnitudes. For the special case that $\ell=1$ and $k=2$, Gerbner, Methuku and Vizer \cite{GMV19} showed $\mathrm{ex}(n, C_3, 2\cdot C_5)=\Theta(n^2)$.  Recently, Zhang, Chen, Gy\H{o}ri and Zhu \cite{ZCG23} determine the exact value of $\mathrm{ex}(n, C_3, 2\cdot C_5)$. In this paper, we extend it  and  determine  $\mathrm{ex}(n, C_3, (\ell+1) \cdot C_{2k+1}) $ by showing the following theorem.                                                                                                                                                                                                                                                                                                                                                                                                                                                                                                                                                                                                                                                                                                                                                      

\begin{theorem}\label{main-theorem}
For integers $\ell \geq 1$ and $k\ge 1$, we have
\[
\mathrm{ex}(n, C_3, (\ell+1) \cdot C_{2k+1}) = \ell \left\lfloor\frac{(n-\ell)^2}{4}\right\rfloor + (n-\ell)\binom{\ell}{2} + \binom{\ell}{3}
\]
for sufficiently large $n$,  and $K_{\ell} \uproduct T_2(n-\ell)$ is the unique extremal graph.
\end{theorem}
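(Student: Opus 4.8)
The lower bound is the easy direction: in $G_0:=K_\ell\uproduct T_2(n-\ell)$ every odd cycle meets the $K_\ell$ (since $T_2(n-\ell)$ is bipartite), so $G_0$ has no $\ell+1$ pairwise vertex-disjoint odd cycles and is in particular $(\ell+1)\cdot C_{2k+1}$-free; classifying the triangles of $G_0$ according to how many of their vertices lie in the $K_\ell$ gives precisely $\ell\lfloor(n-\ell)^2/4\rfloor+\binom{\ell}{2}(n-\ell)+\binom{\ell}{3}$ copies of $C_3$. It remains to prove the matching upper bound and uniqueness. Fix an $n$-vertex $(\ell+1)\cdot C_{2k+1}$-free graph $G$ with the maximum number of triangles, so $t(G)\ge(\tfrac{\ell}{4}-o(1))n^2$ by the lower bound. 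Everything reduces to the structural claim $(\star)$: there is a set $S\subseteq V(G)$ with $|S|=\ell$ such that $G-S$ is bipartite. Granting $(\star)$, the theorem follows by a direct count: splitting the triangles of $G$ by the size of their intersection with $S$, those with three vertices in $S$ number at most $\binom{\ell}{3}$, those with two at most $\binom{\ell}{2}(n-\ell)$, those with exactly one vertex $v\in S$ number $e\big(G[N(v)\setminus S]\big)\le\lfloor(n-\ell)^2/4\rfloor$ (as $G-S$ is bipartite), and there are none disjoint from $S$; summing gives $t(G)\le\ell\lfloor(n-\ell)^2/4\rfloor+\binom{\ell}{2}(n-\ell)+\binom{\ell}{3}$, and equality forces $S$ to be a clique, every vertex outside $S$ to be adjacent to all of $S$, and $G-S$ to be a balanced complete bipartite graph, i.e.\ $G\cong K_\ell\uproduct T_2(n-\ell)$.

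To prove $(\star)$ one first produces a small transversal of the copies of $C_{2k+1}$. Take a maximal family of pairwise vertex-disjoint copies of $C_{2k+1}$ in $G$; it has $t\le\ell$ members, and if $W$ is the union of their vertex sets then $|W|\le\ell(2k+1)$ and $G-W$ is $C_{2k+1}$-free, hence carries only $o(n^2)$ triangles by the Győri--Li/Alon--Shikhelman bound $\mathrm{ex}(m,C_3,C_{2k+1})=o(m^2)$ (and none when $k=1$). Now let $S$ be a \emph{minimum} set with $G-S$ being $C_{2k+1}$-free, so $|S|\le|W|\le\ell(2k+1)$; since $G-S$ is $C_{2k+1}$-free, $e(G-S)\le\mathrm{ex}(n-|S|,C_{2k+1})=\lfloor(n-|S|)^2/4\rfloor$ for $n$ large, and running the same split of the triangle count (with $|S|$ in place of $\ell$) yields $t(G)\le o(n^2)+|S|\lfloor(n-|S|)^2/4\rfloor+O(n)$; comparing with $t(G)\ge(\tfrac{\ell}{4}-o(1))n^2$ forces $|S|\ge\ell$.

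For the reverse inequality $|S|\le\ell$ I would argue by contradiction. Suppose $|S|\ge\ell+1$. Minimality of $S$ shows each $v\in S$ lies on a copy of $C_{2k+1}$ meeting $S$ only in $v$; combining this with the triangle count one shows (the delicate point) that for every $v\in S$ the graph $G[N(v)\setminus S]$ has $\Omega(n^2)$ edges, so by Erdős--Gallai it contains a path on $2k$ vertices whose ends, joined through $v$, close up to a copy of $C_{2k+1}$ using only $v$ and vertices of $N(v)\setminus S$. Since every such neighbourhood is this dense, one can process $\ell+1$ vertices of $S$ one at a time, at each step finding such a $C_{2k+1}$ avoiding the $O(1)$ vertices used so far (a dense graph survives deleting $O(1)$ vertices), producing $\ell+1$ pairwise vertex-disjoint copies of $C_{2k+1}$ --- a contradiction. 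Hence $|S|=\ell$.

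It remains to upgrade ``$G-S$ is $C_{2k+1}$-free'' to ``$G-S$ is bipartite''. With $|S|=\ell$ the estimates above are nearly tight, so each $e(G[N(v)\setminus S])=(\tfrac14-o(1))n^2$, whence $|N(v)\setminus S|=n-\ell-o(n)$, and by the stability form of $\mathrm{ex}(m,C_{2k+1})=\lfloor m^2/4\rfloor$ each $G[N(v)\setminus S]$ is $o(n^2)$-close to a balanced complete bipartite graph; as these neighbourhoods agree off an $o(n)$ set and cover $V(G)\setminus S$, the graph $G-S$ is itself $o(n^2)$-close to balanced complete bipartite, and in particular dense. If $G-S$ had a triangle $xyz$, then being dense and essentially bipartite it would contain an odd $xy$-path of length $2k-1$ avoiding $z$, hence a copy of $C_{2k+1}$, contradicting $C_{2k+1}$-freeness of $G-S$; so $G-S$ is triangle-free. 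Being also dense, Mantel's theorem gives $e(G-S)\le\lfloor(n-\ell)^2/4\rfloor$ with equality only for the balanced complete bipartite graph, and feeding this back into the now-exact triangle count establishes both the bound and the uniqueness. The main obstacle throughout is the ``delicate point'' in the previous paragraph --- extracting robust density around \emph{every} vertex of the transversal, so as to manufacture $\ell+1$ disjoint copies of $C_{2k+1}$ --- together with the passage from near-extremality to the exact complete bipartite structure, where the stability estimates and careful bookkeeping of the triangle count do the real work.
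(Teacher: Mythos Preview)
Your overall strategy differs from the paper's in two key places, and in both places the proposal has genuine gaps.

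\textbf{Identifying the $\ell$ special vertices.} You work with a \emph{minimum} $C_{2k+1}$-transversal $S$ and try to show $|S|\le\ell$ by asserting that every $v\in S$ satisfies $e\bigl(G[N(v)\setminus S]\bigr)=\Omega(n^2)$, so that $\ell+1$ disjoint copies of $C_{2k+1}$ can be built greedily. But minimality of $S$ only guarantees each $v$ lies on a single, possibly fragile, copy of $C_{2k+1}$; and the triangle count only forces at least $\ell$ vertices of $S$ to have dense link (since each term is at most $e(G-S)\le n^2/4$ and the sum is $(\ell/4-o(1))n^2$), not all of them. If $|S|=\ell+1$ with one sparse vertex and $\ell$ dense ones, your argument as written does not produce the $(\ell+1)$-st robust cycle. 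The paper sidesteps this by \emph{defining} the special set $L$ to be the vertices contained in at least $n^2/(8|U|)$ triangles. Density is then built in: $|L|\le\ell$ is immediate (greedily extract disjoint $P_{2k}$'s in the dense link graphs and close them up), $|L|\ge\ell$ follows from the same count you use, and ``$G-L$ is $C_{2k+1}$-free'' is proved directly (any $C_{2k+1}$ in $G-L$ can be supplemented by $\ell$ further disjoint cycles through the heavy vertices). Choosing heavy vertices rather than a minimum transversal is what makes the ``delicate point'' disappear.

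\textbf{From $C_{2k+1}$-free to the exact count.} Your plan upgrades ``$G-S$ is $C_{2k+1}$-free'' to ``$G-S$ is triangle-free'' by stability plus the one-line claim that a triangle $xyz$ in a dense nearly-bipartite graph yields an odd $xy$-path of length $2k-1$, hence a $C_{2k+1}$. This is exactly the non-trivial content the paper isolates as its key Lemma~2.6: for $n$ large, any $C_{2k+1}$-free $n$-vertex graph satisfies $e(G)+t(G)\le\lfloor n^2/4\rfloor$, with equality only for $T_2(n)$. The proof of that lemma requires a genuine case analysis of where the three triangle vertices sit relative to the near-bipartition $V_1\cup V_2$ (and the large/small-degree refinement $L_i\cup S_i$), followed by a low-degree-vertex-deletion descent; your sketch does not go through as stated, because two of $x,y,z$ lie on the same side of the bipartition and odd paths between same-side vertices are not supplied by the bipartite structure alone. (Also note that $(\star)$ asks for $G-S$ bipartite, but you only argue for triangle-free; triangle-free is what the count actually needs, so this is a minor mismatch.)

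Once the key lemma is available, the paper's endgame is also cleaner than yours: with $|L|=\ell$ and $G-L$ merely $C_{2k+1}$-free, one writes
\[
t(G)\ \le\ \ell\,e(G-L)+t(G-L)+(n-\ell)\binom{\ell}{2}+\binom{\ell}{3}
\ =\ (\ell-1)e(G-L)+\bigl(e(G-L)+t(G-L)\bigr)+(n-\ell)\binom{\ell}{2}+\binom{\ell}{3}
\]
and applies the lemma to the bracketed term; there is no separate need to prove that $G-L$ is bipartite or even triangle-free.
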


This paper is organized as follows. In Section \ref{SEC:P}, we give a lemma about the sum of edges and triangles in $C_{2k+1}$-free graphs which plays a key role in the proof of Theorem \ref{main-theorem}. In Section \ref{SEC:pf-main-thm}, we prove Theorem \ref{main-theorem}.


\section{Key lemma}\label{SEC:P}


In this section, we give a lemma about the sum of edges and triangles in $C_{2k+1}$-free graphs which plays a key role in the proof of Theorem \ref{main-theorem}.
At first, we describe notations and  terminologies.  Let $G$ be a graph, we denote its vertex set by $V(G)$ and the edge set by $E(G)$, and let $v(G) = |V(G)|$, $e(G) = |E(G)|$. For two subsets $S, T$ of $V(G)$, we use $G[S]$ to denote the subgraph of $G$ induced by $S$. Let $N(v)$ denote the neighbours of $v$ in $G$ and $N_S(v) = N(v) \cap S$. We use $G[S, T ]$ to denote the bipartite subgraph induced by the edges with one end in $S$ and the other in $T$. Let $e(S) = e(G[S])$ and $e(S, T ) = e(G[S, T ])$. Denote by $G - S$ the subgraph of $G$ induced by $V(G)\setminus S$. When $S = \{v\}$, we simply write $G - v$. For any $v \in V(G)$, let $d_G(v)$ be the degree of $v$ in $G$ and $d_S(v)$ be the number of neighbours of $v$ in $S$. We use  $t(G)$ to  denote the number of triangles in $G$. For $k\ge 1$, let $P_k$ denote a path on $k$ vertices. A $3$-uniform hypergraph $\mathcal{H}$ is a collection of $3$-subsets of some finite set $V$. For a vertex $v\in V(\mathcal{H})$, the \emph{link} $L_{\mathcal{H}}(v)$ of $v$ in $\mathcal{H}$ is a graph on $V(\mathcal{H})$ and
\[
L_\mathcal{H}(v)=\left\{A \in \binom{V(\mathcal{H})}{2} \colon A \cup \{v\}\in \mathcal{H}\right\}.
\]
The \emph{degree} of $v$ in $\mathcal{H}$ is $d_{\mathcal{H}}(v)=e(L_\mathcal{H}(v))$. Denote by $\Delta(\mathcal{H})$
maximum degree of $\mathcal{H}$.

Now we list some  results which will be used in our proofs.  The first one is the well-known Erd\H{o}s-Gallai theorem about paths \cite{EG59}.
\begin{theorem}[Erd\H{o}s and Gallai \cite{EG59}]\label{59EG}
For positive integers $n$ and $k$ with $n \geq k \geq 2$,
\begin{align*}
\mathrm{ex}(n, P_k) \leq \frac{1}{2}(k-2)n.
\end{align*}
\end{theorem}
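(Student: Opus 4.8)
The plan is to prove the stated bound by induction on $n$: every $P_k$-free graph $G$ on $n$ vertices satisfies $e(G)\le \frac12(k-2)n$. For the base range $n\le k-1$ the bound is immediate, since $e(G)\le\binom{n}{2}=\frac{n(n-1)}{2}\le\frac{(k-2)n}{2}$ using $n-1\le k-2$. For the inductive step I would first reduce to the connected case: if $G$ is disconnected, each component is a smaller $P_k$-free graph, so applying the inductive hypothesis componentwise and summing yields the bound for $G$. Hence I may assume $G$ is connected and $n\ge k$.

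First I would fix a longest path $P=v_0v_1\cdots v_p$ in $G$; since $G$ is $P_k$-free it has at most $k-1$ vertices, so $p\le k-2$. Because $P$ is longest, every neighbour of $v_0$ and every neighbour of $v_p$ lies on $P$. The crux is a rotation argument: I claim there is no index $i$ with both $v_0\sim v_{i+1}$ and $v_p\sim v_i$. Indeed, such an $i$ would produce the cycle $v_0v_1\cdots v_iv_pv_{p-1}\cdots v_{i+1}v_0$ through all $p+1$ vertices of $P$; since $G$ is connected and $n\ge p+2$, some vertex $u\notin V(P)$ is adjacent to a vertex of this cycle, and splicing $u$ onto a spanning path of the cycle with that vertex as an endpoint gives a path on $p+2$ vertices, contradicting the maximality of $P$. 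Writing $A=\{i: v_0\sim v_{i+1}\}$ and $B=\{i: v_p\sim v_i\}$ as subsets of $\{0,\dots,p-1\}$, this claim says $A\cap B=\emptyset$, whence $d(v_0)+d(v_p)=|A|+|B|\le p\le k-2$.

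It then follows that one endpoint, say $v$, has $d(v)\le\frac12(k-2)$. Deleting $v$ leaves a $P_k$-free graph on $n-1$ vertices, to which the inductive hypothesis applies, giving
\[
e(G)=e(G-v)+d(v)\le \frac{(k-2)(n-1)}{2}+\frac{k-2}{2}=\frac{(k-2)n}{2},
\]
which closes the induction. I expect the main obstacle to be the rotation step: one must argue carefully that a vertex off $P$ is adjacent to the spanning cycle (this is exactly where connectivity and the inequality $n\ge p+2$, guaranteed by $n\ge k\ge p+2$, are used) and that the resulting concatenation is genuinely a path one vertex longer than $P$. Once the degree bound $d(v_0)+d(v_p)\le k-2$ is in hand, the choice of the low-degree endpoint and the final deletion are routine.
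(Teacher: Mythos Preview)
Your proof is correct and follows the standard rotation--extension argument for the Erd\H{o}s--Gallai path theorem: the base case, the componentwise reduction to connected graphs, the bound $d(v_0)+d(v_p)\le p\le k-2$ via disjointness of the index sets $A$ and $B$, and the deletion of a low-degree endpoint are all sound. The only place that needed care was the inequality $n\ge p+2$ used to guarantee a vertex off $P$; you correctly derived this from $n\ge k$ and $p\le k-2$.

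There is, however, nothing in the paper to compare against: the paper states this theorem as a classical cited result (Erd\H{o}s and Gallai, 1959) and supplies no proof of its own. It is invoked only as a black box---for instance, to bound the number of edges in $G_n[N(v)]$ when $G_n[N(v)]$ is $P_{2k}$-free, and to extract copies of $P_{2k}$ from dense link graphs in the proof of the main theorem. So your argument is not being measured against any argument in the paper; it simply furnishes a self-contained proof of a tool the authors take for granted.
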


The Tur\'{a}n number of odd cycles can be determined precisely. The following theorem was given by F\"{u}redi and Gunderson \cite{FG15}.

\begin{theorem}[F\"{u}redi and Gunderson \cite{FG15}]\label{15FG}
For positive integers $k\ge 1$ and $n \geq 4k-2$,
\begin{align*}
\mathrm{ex}(n, C_{2k+1}) = \left\lfloor\frac{n^2}{4}\right\rfloor.
\end{align*}
\end{theorem}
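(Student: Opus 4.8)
The plan is to prove the lower and upper bounds separately, with essentially all the work in the upper bound. For the lower bound, the complete balanced bipartite graph $K_{\lceil n/2\rceil,\lfloor n/2\rfloor}$ is bipartite and so contains no odd cycle at all, in particular no $C_{2k+1}$; since it has $\lfloor n^2/4\rfloor$ edges, we get $\mathrm{ex}(n,C_{2k+1})\ge\lfloor n^2/4\rfloor$. For the upper bound I would argue by contradiction: suppose $G$ is $C_{2k+1}$-free on $n\ge 4k-2$ vertices with $e(G)>\lfloor n^2/4\rfloor$, and aim to exhibit a copy of $C_{2k+1}$. A first, free observation is that $G$ is non-bipartite, since every bipartite graph on $n$ vertices has at most $\lfloor n^2/4\rfloor$ edges. (The case $k=1$ is Mantel's theorem, so I assume $k\ge 2$, whence $n\ge 4k-2\ge 2k+2$.)

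The heart of the approach is to produce cycles of many consecutive lengths and to show this range reaches $2k+1$. For the first half I would invoke the weak-pancyclicity phenomenon for dense non-bipartite graphs: a non-bipartite graph on $n$ vertices with more than roughly $(n-1)^2/4$ edges contains a cycle of every length between $3$ and its circumference $c(G)$ (this interpolation of even and odd cycles is due to Bondy and was sharpened by Brandt). Since $e(G)>\lfloor n^2/4\rfloor$ exceeds $(n-1)^2/4+1$ by about $n/2$ in our range, this principle applies, and the whole problem collapses to a single statement about the longest cycle: it suffices to prove $c(G)\ge 2k+1$.

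Establishing $c(G)\ge 2k+1$ is where the precise threshold $n\ge 4k-2$ enters, and it is the main obstacle. Suppose instead $c(G)\le 2k$, i.e. $G$ has no cycle longer than $2k$. The elementary Erd\H{o}s--Gallai circumference bound only gives $e(G)\le k(n-1)$, which at $n=4k-2$ equals $4k^2-3k$ and so exceeds $\lfloor n^2/4\rfloor=(2k-1)^2=4k^2-4k+1$ for every $k\ge 2$; thus the naive bound is insufficient at exactly the threshold, and indeed it is attained by clique-trees (copies of $K_{2k}$ glued at cut vertices), which are $C_{2k+1}$-free with circumference $2k$. The fix is to pass to the block (maximal $2$-connected) structure of $G$: since a cycle lives inside a single block, $c(G)\le 2k$ forces every block to have circumference at most $2k$, and on each $2$-connected block one applies Kopylov's sharp bound for graphs of bounded circumference rather than Erd\H{o}s--Gallai. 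A superadditivity estimate for $\lfloor\cdot^2/4\rfloor$ across the block tree then shows that the total edge count cannot exceed $\lfloor n^2/4\rfloor$ once $n\ge 4k-2$, contradicting the assumption; this forces $c(G)\ge 2k+1$ and completes the proof. The delicate point, and the reason the bound is $4k-2$ rather than $4k-1$, is precisely that one must use Kopylov's $2$-connected estimate and track the cut-vertex gluings, whose extremal case glues $K_{2k}$ to $K_{2k-1}$ on $4k-2$ vertices and meets $(2k-1)^2$ exactly.

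An alternative to the circumference analysis is a direct induction on $n$. If $\delta(G)\le\lfloor n/2\rfloor$, delete a vertex of minimum degree and apply the inductive hypothesis, using the identity $\lfloor (n-1)^2/4\rfloor+\lfloor n/2\rfloor=\lfloor n^2/4\rfloor$; if instead $\delta(G)>n/2$, then $\delta(G)\ge n/2$ with $G\ne K_{n/2,n/2}$ (the latter has minimum degree exactly $n/2$), so Bondy's pancyclicity theorem makes $G$ pancyclic and hence forces a copy of $C_{2k+1}$ since $2k+1\le n$, a contradiction. This cleanly handles every $n\ge 4k-1$ but deflates the problem to the single base case $n=4k-2$, which cannot be reduced by vertex deletion (the extremal number jumps above $\lfloor n^2/4\rfloor$ below the threshold) and must again be settled by the Kopylov-type block analysis above. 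Either way, the base case $n=4k-2$ is the true crux of the theorem.
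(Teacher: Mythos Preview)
The paper does not prove this theorem; it is quoted without proof as a result of F\"uredi and Gunderson \cite{FG15} and used only as a black box (to bound $e(W)$ in the proof of Theorem~\ref{main-theorem}). So there is no ``paper's own proof'' to compare against.

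That said, your outline is essentially the F\"uredi--Gunderson strategy: non-bipartiteness plus Brandt's weak-pancyclicity theorem reduces the problem to showing that a $C_{2k+1}$-free graph with more than $\lfloor n^2/4\rfloor$ edges must have circumference at least $2k+1$, and then one rules out $c(G)\le 2k$ via Kopylov's sharp bound for $2$-connected graphs of bounded circumference together with a block-tree analysis. Your identification of the extremal non-bipartite configuration at $n=4k-2$ (namely $K_{2k}$ and $K_{2k-1}$ glued at a vertex, with exactly $(2k-1)^2$ edges) is correct and is exactly why Erd\H os--Gallai alone is insufficient.

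One caution about your sketch: the phrase ``superadditivity estimate for $\lfloor\cdot^2/4\rfloor$ across the block tree'' does not work as stated, because the individual blocks can be as small as a single edge or a $K_{2k}$, and for $m\le 2k$ a block on $m$ vertices may have $\binom{m}{2}\gg\lfloor m^2/4\rfloor$ edges while still having circumference at most $2k$. The actual accounting must sum the Kopylov/clique bounds over the blocks against the constraint $\sum_i (b_i-1)=n-1$ (for a connected graph), and the optimisation is not simply superadditivity of $\lfloor x^2/4\rfloor$. This is precisely the delicate computation in the F\"uredi--Gunderson paper; your sketch points at the right tool but does not carry it out.
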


Determining the Tur\'{a}n number of even cycles is a key problem in extremal graph theory and has attracted much attention. The following result is needed.

\begin{theorem}[Bondy and Simonovits \cite{BS74}]\label{17BJ}
For positive integers $n$ and $k$,
\begin{align*}
\mathrm{ex}(n, C_{2k}) \leq 100kn^{1+1/k}.
\end{align*}
\end{theorem}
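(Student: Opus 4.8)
The plan is to prove the contrapositive: if $G$ is a graph on $n$ vertices with $e(G) > 100k\,n^{1+1/k}$, then $G$ contains a copy of $C_{2k}$. Since deleting edges or vertices cannot create a $C_{2k}$, I would first clean up $G$. The only reduction I really need is to pass to a subgraph of large minimum degree: a standard averaging argument (repeatedly discarding any vertex of degree at most $e(G)/n$) produces a subgraph $H \subseteq G$ with minimum degree $\delta \ge e(G)/n > 100k\,n^{1/k}$. Writing $n' = v(H) \le n$, we have $\delta > 100k\,(n')^{1/k}$, so it suffices to exhibit a $C_{2k}$ in a graph $H$ whose minimum degree grows like $(n')^{1/k}$. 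If one prefers cleaner breadth-first levels, one may also first take a bipartite subgraph keeping at least half the edges; this changes the constants but not the argument.

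The engine of the proof is a count of paths of length $k$. On the one hand, the large minimum degree forces many of them: choosing the vertices of an ordered path $v_0 v_1 \cdots v_k$ greedily, each new vertex has at least $\delta - k$ admissible neighbours, so the number of such paths is at least
\[
n'\,(\delta - k)^k \ge n'\,(\delta/2)^k \ge (50k)^k\,(n')^{2},
\]
using $\delta > 100k\,(n')^{1/k} \ge 2k$. On the other hand, there are fewer than $(n')^2$ ordered pairs of endpoints, so by averaging some pair $(u,v)$ is joined by at least $(50k)^k$ distinct paths of length exactly $k$. The remaining task is purely local: I must show that if two vertices are joined by sufficiently many (a number depending only on $k$) paths of length $k$, then $G$ contains a cycle of length $2k$.

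This last step is where the real content lies, and it is the step I expect to be the main obstacle. The clean case is when two of the many $u$--$v$ paths are \emph{internally disjoint}: their union is then exactly a $C_{2k}$. Encoding each path by its set of $k-1$ internal vertices, and noting that a fixed $(k-1)$-set underlies at most $(k-1)!$ paths, the huge supply $(50k)^k$ of paths yields many distinct internal $(k-1)$-sets; any two disjoint such sets finish the proof. The difficulty is that these sets may form an intersecting family — in the worst case every path runs through one common vertex $w$ — in which case no two paths are internally disjoint and a naive extraction fails. Overcoming this is precisely the heart of the Bondy--Simonovits argument: one analyses the breadth-first tree and its fundamental cycles, or recurses on the pairs $(u,w)$ and $(w,v)$, to produce cycles of \emph{every} even length in a long interval around $2k$. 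This is both stronger and more robust than pinning down a single length, and in particular it delivers the desired $C_{2k}$.

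Finally, once the local lemma is in hand, assembling the pieces is routine: the minimum-degree reduction and the path count were arranged so that the number of paths between the extremal pair comfortably exceeds any threshold the local lemma demands, and the constant $100k$ (which is far from optimal) leaves ample slack to absorb the halving from a bipartite reduction and the lower-order $\delta - k$ versus $\delta$ losses. Tracking these constants carefully then yields the stated bound $\mathrm{ex}(n, C_{2k}) \le 100k\,n^{1+1/k}$ for all positive integers $n$ and $k$.
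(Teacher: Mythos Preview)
The paper does not prove this statement; Theorem~\ref{17BJ} is quoted as a black box from Bondy and Simonovits~\cite{BS74} and used only to bound $t(G)$ via Theorem~\ref{16AS}. So there is no ``paper's own proof'' to compare against, and your proposal should be read as an attempt at the original result.

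As such, your outline has a genuine gap. The minimum-degree reduction and the pigeonhole on ordered length-$k$ paths are fine and do produce a pair $(u,v)$ joined by $(50k)^k$ paths of length $k$. But the step you yourself flag as ``the main obstacle'' is not carried out: you need that many length-$k$ $u$--$v$ paths force a $C_{2k}$, and you only gesture at ``the breadth-first tree and its fundamental cycles'' or ``recurse on $(u,w)$ and $(w,v)$''. Neither suggestion, as stated, is an argument. The sunflower/intersecting-family obstruction you describe is real --- all paths may pass through a common internal vertex $w$, and then no two are internally disjoint --- and the recursion you hint at does not obviously terminate with the correct length: splitting at $w$ gives many $u$--$w$ paths of some length $i$ and many $w$--$v$ paths of length $k-i$, but combining cycles through $w$ of lengths $2i$ and $2(k-i)$ does not yield a $C_{2k}$.

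It may help to know that the original Bondy--Simonovits proof is organised differently from your sketch: rather than counting $u$--$v$ paths globally, one fixes a root, runs BFS, and shows that if some level $L_i$ satisfies $|L_{i+1}| < (\text{const})\,|L_i|$ then the bipartite graph $G[L_i, L_{i+1}]$ is dense enough to contain a long path, which together with the BFS tree closes into cycles of every even length in an interval including $2k$. The ``cycles of all even lengths in an interval'' phenomenon you mention is correct, but it emerges from this level-by-level analysis, not from a local lemma about two endpoints. If you want to salvage the path-counting route, you would need an explicit lemma of the form ``$f(k)$ length-$k$ paths between two vertices contain a $\theta$-graph whose two branches have lengths summing to $2k$'', and proving that lemma is essentially as hard as the theorem itself.
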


Determining the number of triangle in $C_{\ell}$ is closely related to the Tur\'{a}n number of even cycles. When $\ell$ is odd, Alon and Shikhelman \cite{AS16} proved the following theorem.

\begin{theorem}[Alon and Shikhelman \cite{AS16}]\label{16AS}
For positive integers $n$ and $k \geq 2$,
\[
\mathrm{ex}(n, C_3, C_{2k+1}) \leq \frac{16(k-1)}{3}\mathrm{ex}\left(\left\lceil \frac{n}{2} \right\rceil, C_{2k}\right).
\]
\end{theorem}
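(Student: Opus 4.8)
The plan is to bound the number of triangles in a $C_{2k+1}$-free graph $G$ on $n$ vertices by a constant multiple of the edge count of a $C_{2k}$-free graph on about $n/2$ vertices, invoking the definition of $\mathrm{ex}(\cdot, C_{2k})$ (and Theorem~\ref{17BJ} if only the order of magnitude is wanted). I would begin by localising the triangles with a random balanced cut: pick $A \subseteq V(G)$ uniformly at random with $|A| = \lceil n/2 \rceil$ and set $B = V(G) \setminus A$. Any fixed triangle has exactly two vertices in $A$ and one in $B$ with probability $3/8 - o(1)$, so some choice of $A$ gives at least $(3/8 - o(1))\,t(G)$ triangles of this type. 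Fix such a partition and let $t^{*}$ count them, so $t(G) \le (8/3 + o(1))\,t^{*}$.

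Next I would establish two facts. Localising each counted triangle by its vertex $b \in B$ gives $t^{*} = \sum_{b \in B} e\bigl(G[N_A(b)]\bigr)$, where $N_A(b) = N(b) \cap A$; and $G[N_A(b)]$ is $P_{2k}$-free, since a path $u_1 u_2 \cdots u_{2k}$ inside $N_A(b)$ would close up with $b$ to the $(2k{+}1)$-cycle $b\,u_1 u_2 \cdots u_{2k}\,b$. Hence Theorem~\ref{59EG} yields $e(G[N_A(b)]) \le (k-1)\,|N_A(b)|$ and therefore $t^{*} \le (k-1)\sum_{b \in B}|N_A(b)|$. On the other hand, let $E^{*}$ be the set of edges $uv$ of $G[A]$ with $N(u)\cap N(v)\cap B \ne \emptyset$ (so the counted triangles all use edges of $E^{*}$); then $(A, E^{*})$ is $C_{2k}$-free, because a cycle $u_1 u_2 \cdots u_{2k} u_1$ with every edge in $E^{*}$, together with any $b \in N(u_1)\cap N(u_2)\cap B$, produces the $(2k{+}1)$-cycle $b\,u_2 u_3 \cdots u_{2k} u_1\,b$ — and $b \in B$ is automatically distinct from the $u_i$, so no degeneracy arises. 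Thus $|E^{*}| \le \mathrm{ex}(\lceil n/2 \rceil, C_{2k})$.

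The remaining, and main, difficulty is to glue these together: one must bound $t^{*} = \sum_{uv \in E^{*}}|N(u)\cap N(v)\cap B|$ by a constant (depending only on $k$) times $|E^{*}| \le \mathrm{ex}(\lceil n/2\rceil, C_{2k})$, even though a single edge of $E^{*}$ may lie in unboundedly many of the counted triangles — its ``book'' $N(u)\cap N(v)\cap B$ can be arbitrarily large. The extra leverage is that such a book $S = N(u)\cap N(v)\cap B$ induces a $P_{2k-1}$-free graph in $G$ (a path on $2k-1$ vertices inside $S$ would give a $(2k{+}1)$-cycle through $u$ and $v$), so a large book is internally sparse; and moreover $\mathrm{ex}(\lceil n/2\rceil, C_{2k})$ is itself at least linear in $n$ (disjoint copies of $K_{2k-1}$ are $C_{2k}$-free). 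Balancing the Erdős–Gallai bound on the neighbourhoods $G[N_A(b)]$ against the sparsity forced on the large books is precisely the careful double counting that yields the explicit constant $\tfrac{16(k-1)}{3}$; combining it with $t(G) \le (8/3 + o(1))\,t^{*}$ then completes the proof.
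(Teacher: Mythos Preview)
The paper does not prove this statement at all; Theorem~\ref{16AS} is quoted verbatim from Alon and Shikhelman~\cite{AS16} as one of several cited inputs, with no argument supplied. So there is no in-paper proof to compare against, and the relevant question is simply whether your proposal stands on its own.

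It does not. You correctly set up the random balanced cut, obtaining $t(G)\le (8/3+o(1))\,t^{*}$ with $t^{*}$ counting triangles having two vertices in $A$; and you correctly observe that the edge set $E^{*}\subseteq G[A]$ of $A$-edges lying in such triangles is $C_{2k}$-free, whence $|E^{*}|\le \mathrm{ex}(\lceil n/2\rceil, C_{2k})$. But the quantity you must bound is
\[
t^{*}=\sum_{uv\in E^{*}} |N(u)\cap N(v)\cap B|,
\]
and you never bound this by a constant multiple of $|E^{*}|$. You explicitly flag this as ``the remaining, and main, difficulty'' and then assert that ``careful double counting'' produces the constant $16(k-1)/3$ --- but that is precisely the content of the theorem, not a proof of it. A single edge of $E^{*}$ may lie in arbitrarily many of the counted triangles, so no per-edge bound is available; your side observation that the book $N(u)\cap N(v)\cap B$ is $P_{2k-1}$-free concerns edges \emph{inside $B$} and yields no inequality between $t^{*}$ and $|E^{*}|$. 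Your alternative route $t^{*}\le (k-1)\sum_{b\in B}|N_A(b)|=(k-1)\,e(A,B)$ is far too weak, since $e(A,B)$ can be of order $n^{2}$ while $\mathrm{ex}(\lceil n/2\rceil,C_{2k})=O(n^{1+1/k})$. (There is also a minor issue: the $o(1)$ in $8/3+o(1)$ would have to be removed to obtain the exact constant $16(k-1)/3$ rather than an asymptotic version.)

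In short, you have identified the natural $C_{2k}$-free auxiliary graph on $A$, but the substantive step --- controlling the multiplicities with which edges of $E^{*}$ are used by the counted triangles --- is asserted rather than carried out.
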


We also need a stability result of odd cycles which was given by Roberts and Scott \cite{RS18}. Let $F$ be a graph. We use  $\chi(F)$ to denote the chromatic number of $F$. If there is an edge $e$ in $F$ such that $\chi(F - e) < \chi(F)$, then $F$ is called \emph{edge-critical}.

\begin{theorem}[Roberts and Scott\cite{RS18}]\label{18RS}
Let $F$ be a edge-critical  graph with  $\chi(F) = k+1 \leq 3$, and let $g(n) = o(n^2)$ be a function. If $G$ is an $F$-free graph on  $n$ vertices with $e(G) \geq t_k(n) - g(n)$, then $G$ can be made $k$-partite by deleting $O(n^{-1}g(n)^{3/2})$ edges.
\end{theorem}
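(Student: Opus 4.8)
The plan is to prove the theorem by analysing the optimal bipartition of $G$ and bounding the number of edges lying inside its parts. Since $\chi(F)=k+1\le 3$, only $k\in\{1,2\}$ occur; the case $k=1$ is immediate (then $F$ is a single edge, $t_1(n)=0$, and every edge-free graph is already $1$-partite), so assume $k=2$, i.e.\ $\chi(F)=3$, with $t_2(n)=\lfloor n^2/4\rfloor$. Fix the critical edge $e=xy$ of $F$. By definition $F-e$ is bipartite, and because restoring $e$ raises the chromatic number, every proper $2$-colouring of $F-e$ assigns $x,y$ the same colour; hence $F-e$ admits a bipartition $(A,B)$ with $x,y\in A$. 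Writing $s=|V(F)|$, this shows $F\subseteq K^{+}$, where $K^{+}$ is the complete bipartite graph between two $s$-sets together with one extra edge placed inside the part carrying the images of $x,y$. Thus, to exhibit a copy of $F$ it suffices to build this rooted structure on a within-part edge.

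Next I set up the partition. Let $(X,Y)$ be a \emph{maximum} bipartition of $G$, let $B$ be the graph of \emph{bad} edges (those inside $X$ or inside $Y$) with $d_B(v)$ the degree of $v$ in $B$, and put $b=e(B)=e(X)+e(Y)$. Since $b$ is exactly the minimum number of edges whose deletion makes $G$ bipartite, it is $b$ that must be bounded by $O(n^{-1}g(n)^{3/2})$. Let $m=|X||Y|-e(X,Y)$ count the absent crossing pairs, and for a vertex $u$ let $\delta(u)$ be its deficiency (non-neighbours in the opposite part), so $\sum\delta=m$ on each side. From $e(G)=|X||Y|-m+b$ and $|X||Y|\le t_2(n)$, together with $e(G)\ge t_2(n)-g(n)$, I obtain the budget inequality $m\le b+g(n)$ and the balance estimate $\bigl||X|-n/2\bigr|\le\sqrt{b+g(n)}$. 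A preliminary invocation of the classical qualitative Erd\H{o}s--Simonovits stability theorem gives $b=o(n^2)$, which keeps the parts balanced, $|Y|=(1/2+o(1))n$, throughout the quantitative argument.

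The structural heart is a \emph{rooted embedding} (supersaturation) lemma: there is a constant $t_0=t_0(F)$ such that if a bad edge $uv$ inside $X$ has at least $t_0$ common neighbours in $Y$, then $G\supseteq K^{+}\supseteq F$. Since only $m=o(n^2)$ crossing pairs are absent, all but $o(n)$ vertices of each part have near-full degree across; one selects $|B|$ common neighbours of $u,v$ of small deficiency and then greedily completes the remaining $|A|-2$ vertices of $K^{+}$ among the low-deficiency vertices of $X$, all codegrees staying linear because $F$ has bounded size. As $G$ is $F$-free, every bad edge is therefore \emph{poor}: $|N_Y(u)\cap N_Y(v)|<t_0$, whence $d_Y(u)+d_Y(v)<|Y|+t_0$, i.e.\ $\delta(u)+\delta(v)>|Y|-t_0$ (and symmetrically for $Y$). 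Two consequences drive the count. First, for any vertex $v$ its bad-neighbours $u$ satisfy $\delta(u)\ge |Y|-t_0-\delta(v)$, so $d_B(v)\bigl(d_Y(v)-t_0\bigr)\le\sum_{u\sim_B v}\delta(u)\le m$; combined with $d_B(v)\le d_Y(v)$ (maximality of the cut) this yields $d_B(v)^2\le \tfrac{d_Y(v)}{d_Y(v)-t_0}\,m\le 2m$, hence $\Delta(B)\le\sqrt{2m}$ for large $n$. Second, every bad edge has an endpoint of deficiency at least $(|Y|-t_0)/2$; these high-deficiency vertices form a set $H$ with $|H|\le 2m/(|Y|-t_0)=O(m/n)$, and every bad edge meets $H$. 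Therefore
\[
b\ \le\ \sum_{v\in H} d_B(v)\ \le\ |H|\,\Delta(B)\ =\ O\!\left(\frac{m}{n}\cdot\sqrt{m}\,\right)\ =\ O\!\left(\frac{m^{3/2}}{n}\right).
\]
Feeding back $m\le b+g(n)$ closes the loop: either $b\le g(n)$, giving $m=O(g(n))$ and $b=O(n^{-1}g(n)^{3/2})$ directly, or $b>g(n)$, which forces $b=\Omega(n^2)$ and contradicts the preliminary bound $b=o(n^2)$.

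The main obstacle is not the counting, which is the clean computation above, but making the rooted embedding lemma fully rigorous in the presence of $o(n^2)$ missing crossing pairs: one must secure a common neighbourhood of $u,v$ that simultaneously supports the remaining vertices of $K^{+}$, which requires quarantining the $O(m/n)$ high-deficiency vertices and checking that the greedy completion never stalls; pinning down the constant $t_0$ (and thus the implied constant) is the delicate part. A secondary point is to treat bad edges inside $X$ and inside $Y$ symmetrically and to absorb the balancing error $\sqrt{b+g(n)}$ into $|Y|=(1/2+o(1))n$; both become routine once the embedding lemma and the bound $\Delta(B)\le\sqrt{2m}$ are in hand.
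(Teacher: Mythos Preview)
The paper does not prove this theorem at all; it is quoted from Roberts and Scott~\cite{RS18} and used as a black box in the proof of Lemma~\ref{auxilary theorem}. There is therefore no ``paper's proof'' to compare your attempt against.

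For what it is worth, your sketch is a reasonable outline of how such a stability result is proved, and the counting backbone (max-cut, the budget $m\le b+g(n)$, every bad edge is ``poor'', a high-deficiency cover $H$ of size $O(m/n)$, the bound $\Delta(B)=O(\sqrt m)$, and the final bootstrap) is essentially correct and delivers $b=O(n^{-1}g(n)^{3/2})$.

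The one point that is not quite right as written is the rooted embedding lemma with $t_0=t_0(F)$ an \emph{absolute} constant. Nothing prevents all $t_0$ common neighbours of a bad edge $uv$ in $Y$ from being high-deficiency vertices, in which case the greedy completion of the remaining $|A|-2$ vertices of $K^+$ on the $X$-side can stall. The fix is to let $t_0$ exceed the number of $Y$-vertices of deficiency larger than $|X|/(2s)$, i.e.\ to take $t_0=O(m/n)+s$; then among the common neighbours one can select $|B|$ of low deficiency, and each of these misses at most $|X|/(2s)$ vertices of $X$, so the completion succeeds. With this adjustment the rest of your argument is unchanged: since $m=o(n^2)$ one still has $|Y|-t_0=(1/2+o(1))n$, so $|H|=O(m/n)$ and $\Delta(B)=O(\sqrt m)$ hold as before, and the bootstrap closes exactly as you wrote.
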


The following lemma is useful to prove Theorem \ref{main-theorem}.
\begin{lemma}\label{auxilary theorem}
For an integer $k\ge 1$, let $G$ be a $C_{2k+1}$-free graph on $n$ vertices. For sufficiently large $n$, we have
\begin{align*}
  e(G) + t(G) & \leq \left\lfloor \frac{n^2}{4}\right\rfloor,
\end{align*}
and the equality holds if and only if $G = T_2(n)$.
\end{lemma}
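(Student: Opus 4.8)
The plan is as follows; throughout write $t_k(n)$ for the number of edges of the Tur\'an graph $T_k(n)$, so $t_2(n)=\lfloor n^2/4\rfloor$.

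\textbf{Reduction to $k\ge 2$ and to the near‑extremal case.}
When $k=1$ the graph $G$ is triangle‑free, so $t(G)=0$ and the statement is exactly Mantel's theorem (the case $r=2$ of Tur\'an's theorem), including the equality characterization. So assume $k\ge 2$ and $n$ large. Combining Theorems \ref{16AS} and \ref{17BJ} yields a constant $c_k>0$ with $t(G')\le \mathrm{ex}(n,C_3,C_{2k+1})<\tfrac12 c_k n^{1+1/k}$ for every $n$‑vertex graph $G'$. If $e(G)\le \lfloor n^2/4\rfloor - c_k n^{1+1/k}$, then $e(G)+t(G)< \lfloor n^2/4\rfloor$, so we have strict inequality and in particular no equality case to worry about. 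Hence we may assume
\[
e(G) > \left\lfloor\frac{n^2}{4}\right\rfloor - c_k n^{1+1/k}.
\]

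\textbf{Applying stability and setting up the balance inequality.}
The cycle $C_{2k+1}$ is edge‑critical with $\chi(C_{2k+1})=3$, and by Theorem \ref{15FG} we have $\mathrm{ex}(n,C_{2k+1})=\lfloor n^2/4\rfloor=t_2(n)$. Applying Theorem \ref{18RS} with $g(n)=c_k n^{1+1/k}=o(n^2)$, the graph $G$ can be made bipartite by deleting $O(n^{1/2+3/(2k)})$ edges. Fix a bipartition $(A,B)$ of $V(G)$ maximizing $e(A,B)$; then every vertex has at least as many neighbours across the partition as inside its part, and the number $q:=e(A)+e(B)$ of edges inside a part (``bad edges'') satisfies $q=O(n^{1/2+3/(2k)})=o(n^2)$. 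Put $a:=|A|\le b:=|B|$, $M:=\lfloor n^2/4\rfloor - ab\ge 0$ and $N:=ab-e(A,B)\ge 0$. Since $e(G)=e(A,B)+q=ab-N+q$, the desired bound $e(G)+t(G)\le\lfloor n^2/4\rfloor$ is equivalent to the \emph{balance inequality}
\[
q+t(G)\ \le\ M+N ,
\]
and for the equality characterization it suffices to show in addition that this is \emph{strict} whenever $q\ge 1$: indeed $q=0$ makes $G$ bipartite with parts $A,B$, hence $t(G)=0$, and then $N=0$ forces $G=K_{a,b}$ while $M=0$ forces $|a-b|\le 1$, i.e. $G=T_2(n)$.

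\textbf{Proving the balance inequality.}
Every odd cycle of $G$ uses an odd number of bad edges (the cross edges switch sides, and an even number of them is used around any cycle), so a copy of $C_{2k+1}$ consists of $2j+1$ bad edges joined cyclically by paths of $G[A,B]$ of total length $2k-2j$. From this, using the Erd\H{o}s--Gallai bound (Theorem \ref{59EG}) to extract long paths in $G[A,B]$, one obtains a local statement: around each bad edge, and around each small configuration of bad edges that could be completed to a $C_{2k+1}$, there must be many cross non‑edges unless the relevant local degrees into the opposite part are small. These cross non‑edges feed into $N$; when $(A,B)$ is very unbalanced the deficit is instead absorbed by $M$. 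A counting argument then bounds $q+t(G)$ by this ``paid'' amount, strictly once $q\ge1$, which together with the previous paragraph gives the lemma.

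\textbf{Main obstacle.}
The delicate step is this last counting: bad edges may share endpoints and several may cooperate to build a $C_{2k+1}$, so the charges cannot be assigned edge by edge. I expect to handle this by an extra cleanup that isolates a small ``exceptional'' vertex set carrying all bad edges and all low‑degree vertices, treats the almost‑complete balanced bipartite remainder directly, and controls the bounded interaction with the exceptional set. The case $k=2$ is the hardest, since there Theorem \ref{18RS} only bounds $O(n^{5/4})$ bad edges, so the size of the exceptional set must be controlled using $C_5$‑freeness itself (for instance, a $P_3$ inside $A$ together with an edge inside $B$ and two suitable cross edges already forms a $C_5$).
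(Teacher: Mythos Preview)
Your reduction to $k\ge 2$ and to the near-extremal regime, and your invocation of Theorem~\ref{18RS}, are correct and match the paper's opening moves. The reformulation of the target as a ``balance inequality'' $q+t(G)\le M+N$ is also valid algebra. But the proof is not complete: you explicitly label the decisive step as the ``Main obstacle'' and only say you \emph{expect} to handle it by an unspecified cleanup. No argument is actually given that the stock of missing cross edges $N$ (together with the imbalance $M$) suffices to absorb both $q$ and $t(G)$; the parity remark that every odd cycle traverses an odd number of bad edges is true but carries no quantitative information, and your sketch does not explain how to control the many triangles that a single bad edge can sit in, nor how overlapping bad edges are to be charged without double counting. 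As written this is a plan, not a proof.

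The paper avoids any such charging by a vertex-deletion monotonicity argument. After the same stability step it sets $f(G)=e(G)+t(G)-\lfloor n^2/4\rfloor$, lets $G_n$ be an $n$-vertex $C_{2k+1}$-free graph maximizing $f$, and uses the almost-bipartite structure only to locate, inside any triangle of $G_n$, a vertex $v$ with $d(v)<n/(2k)-1$ (essentially: two vertices of a triangle with many neighbours in the ``robust'' sets $L_1,L_2$ already let one complete a $C_{2k+1}$). Since $G_n[N(v)]$ is $P_{2k}$-free, Theorem~\ref{59EG} bounds the triangles through $v$ by $(k-1)d(v)$, and deleting $v$ yields $f(n-1)\ge f(n)+\lfloor(2n-1)/4\rfloor-k\,d(v)>f(n)+(k-1)$. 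Iterating from a large $n_0$ down to a fixed threshold contradicts $f\ge 0$ (witnessed by $T_2$). Hence $G_n$ is triangle-free for all large $n$, and Tur\'an's theorem gives both the bound and the equality case. This ``find a light vertex in every triangle and delete it'' device is the missing idea; it replaces your proposed global charging by a clean local/inductive step.
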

\begin{proof}
Let $G$ be a $C_{2k+1}$-free graph on $n$ vertices. It suffices to assume that $k\ge2$ and $G$ contains a triangle in view of Tur\'an's theorem. By Theorems \ref{17BJ} and \ref{16AS}, there exists a constant $c_0>0$ such that $t(G)\le c_0n^{1+1/k}$. Thus, we may also assume that
\[
e(G)\ge\left\lfloor \frac{n^2}{4}\right\rfloor-c_0n^{1+1/k}.
\]
Define
\[
f(G)=e(G) + t(G) - \left\lfloor \frac{n^2}{4}\right\rfloor  \quad\text{and}\quad  f(n)=\max_{|V(G)|=n}f(G).
\]
Note that $T_2(n)$ shows that $f(n)\ge 0$ for any integer $n\ge 1$.

Let $G_n$ be a $C_{2k+1}$-free graph on $n$ vertices with $e(G_n)\ge\left\lfloor n^2/4\right\rfloor-c_0n^{1+1/k}$ and $f(G_n)=f(n)$.  By Theorem \ref{18RS}, there exists a constant $c_1>0$ and a partition $V(G_n) = V_1 \cup V_2$ such that
\begin{align}\label{equ: lower bound of e(V1 V2)}
e(V_1, V_2) &\geq \frac{n^2}{4} - c_0n^{1+1/k} - c_1n^{(k+1)/(2k)} \geq \frac{n^2}{4} - 2c_0n^{1+1/k}.
\end{align}
Next our goal is to prove that each $G_n$ is $C_3$-free for sufficiently $n$. We next claim that if $G_n$ contains a triangle, then we can find a vertex $v$ in $G_n$ such that $d(v) < \frac{n}{2k} - 1$ and
\begin{align} \label{fn}
f(n-1) - f(n) &> k-1.
\end{align}

\begin{claim}
For each $i \in [2]$, $||V_i| - \frac{n}{2}| \leq (2c_0)^{1/2}n^{(1+k)/(2k)}$.
\end{claim}
\begin{proof}
Let $|V_1| = \frac{n}{2} + x$ and $|V_2| = \frac{n}{2} - x$. Then
\begin{align*}
e(V_1, V_2) &\leq |V_1||V_2| = \frac{n^2}{4} - x^2.
\end{align*}
Combining the inequality \eqref{equ: lower bound of e(V1 V2)}, we have $|x| \leq (2c_0)^{1/2}n^{(1+k)/(2k)}$.
\end{proof}
For each $i\in [2]$, define
\[
L_i := \left\{v\in V_i|d_{V_{3-i}}(v) \geq \left(1-\frac{1}{10k}\right)|V_{3-i}|\right\}
\]
and $S_i = V_i\backslash L_i$.
\begin{claim}
We have $|L_i| \geq \left(1- \frac{1}{4k}\right)|V_i|$ for $i \in [2]$.
\end{claim}
\begin{proof}
Observe that
\begin{align*}
e(V_1, V_2) &\leq |L_i||V_{3-i}| +\left(1-\frac{1}{10k}\right)|V_{3-i}||S_i|\\
&\leq |L_i||V_{3-i}|+ \left(1-\frac{1}{10k}\right)|V_{3-i}|(|V_i| - |L_i|)  \\
&= \left(1-\frac{1}{10k}\right)|V_i||V_{3-i}| + \frac{|L_i||V_{3-i}|}{10k}.
\end{align*}
This together with the inequality \eqref{equ: lower bound of e(V1 V2)} implies that there exists an integer $n_1$ such that $|L_i| \geq \left(1- \frac{1}{4k}\right)|V_i|$ for all $n \geq n_1$.
\end{proof}
Now we will show the following claim which will be used frequently.
\begin{claim}\label{fact common}
For any two vertices $u, v$ in $L_i$, $|N_{L_{3-i}}(u) \cap N_{L_{3-i}}(v)| \geq 2k$ for $i \in [2]$.
\end{claim}
\begin{proof}
Suppose that $u$ and $v$ are two different vertices in $L_i$. It is easy to see that
\begin{align*}
 |N_{L_{3-i}}(u) \cap N_{L_{3-i}}(v)|&\geq d_{L_{3-i}}(u) + d_{L_{3-i}}(v) - |L_{3-i}| \\
 &\geq 2\left((1-\frac{1}{10k}))|V_{3-i}| - S_{3-i}\right) - |L_{3-i}| \\
 &\geq 2k.
\end{align*}
As desired.
\end{proof}

\begin{claim}\label{independent set}
$L_i$ is an independent set for $i\in[2]$.
\end{claim}
\begin{proof}
Without loss of generality, we let $i = 1$. Suppose that $x_1x_2$ is an edge in $G_n[L_1]$. By $d_{V_2}(v) \geq \left(1-\frac{1}{10k}\right)|V_2|$, we can find two different vertices $y_1$ and $y_2$ in $L_2$ such that $x_1$, $x_2$ are adjacent to $y_1$, $y_2$, respectively. We can find two different vertices $x_3$ and $x_4$ in $L_1$ such that $x_3$, $x_4$ are adjacent to $y_1$, $y_2$, respectively. Step by step, repeating the above operation and choose a common neighbor $x_{k+1}$ of $y_{k-1}$ and $y_k$ (or the common neighbor $y_k$ of $x_k$ and $x_{k+1}$) at last step by Claim \ref{fact common}. Then we can get a copy of $C_{2k+1}$: $x_1y_1x_3y_3 \cdots y_kx_{k+1}y_{k-2}x_{k-2} \cdots y_2x_2x_1$, a contradiction.
\end{proof}
Recall that $G_n$ contains a triangle. Let $v_1v_2v_3$ is a triangle in $G_n$. Next we claim that there exists a vertex $v$ in $G_n$ such that $d(v) < \frac{n}{2k} - 1$. Let $\mathcal{L} = L_1 \cup L_2$ and $\mathcal{S} = S_1 \cup S_2$. In what follows, we prove this by considering three situations according to Claim \ref{independent set}, i.e.
\begin{enumerate}[label=(\roman*)]
\item\label{lab:1}
$v_1 \in S_1$, $v_2 \in L_1$, $v_3 \in L_2$.
\item\label{lab:2}
$v_2, v_3 \in \mathcal{S}$, $v_1 \in \mathcal{L}$.
\item\label{lab:3}
$v_1, v_2, v_3 \in \mathcal{S}$.
\end{enumerate}
For \ref{lab:1}, next we aim to find a path $y_1x_2y_3x_4 \cdots y_2x_1$ in $G[L_1, L_2]$ such that $y_1$ is adjacent to $v_2$, $x_1$ is adjacent to $v_3$. This forms a copy of $C_{2k+1}$ in $G_n$, a contradiction with the assumption of $G_n$. Let $x_j \in L_1$ and $y_j \in L_2$ for $j \in [k-1]$. Since $d_{V_{3-i}}(v) \geq \left(1-\frac{1}{10k}\right)|V_{3-i}|$ for every $v \in L_i$, we can find $x_1 \in N_{L_1}(v_3)$ and $y_1 \in N_{L_2}(v_2)$. We search for vertices one by one from the neighbours of the found vertices. At the last step, by Claim \ref{fact common} we choose the common neighbour $x_{k-1}$ of $y_{k-2}$ and $y_{k-1}$. Then $v_1v_2y_1x_2y_3x_4 \cdots y_2x_1v_3$ is a copy of $C_{2k+1}$ (see Figure 1). This is a contradiction. So it is impossible that $v_1v_2v_3$ is a triangle.
\begin{center}

\tikzset{every picture/.style={line width=0.75pt}} 

\begin{tikzpicture}[x=0.75pt,y=0.75pt,yscale=-1,xscale=1]

\draw   (181.2,20) .. controls (181.2,15.58) and (184.78,12) .. (189.2,12) -- (522,12) .. controls (526.42,12) and (530,15.58) .. (530,20) -- (530,44) .. controls (530,48.42) and (526.42,52) .. (522,52) -- (189.2,52) .. controls (184.78,52) and (181.2,48.42) .. (181.2,44) -- cycle ;
\draw   (181.2,94) .. controls (181.2,89.58) and (184.78,86) .. (189.2,86) -- (522,86) .. controls (526.42,86) and (530,89.58) .. (530,94) -- (530,118) .. controls (530,122.42) and (526.42,126) .. (522,126) -- (189.2,126) .. controls (184.78,126) and (181.2,122.42) .. (181.2,118) -- cycle ;
\draw    (230.2,11.6) -- (230.2,52.6) ;
\draw    (231.2,86.6) -- (231.2,126.6) ;
\draw  [fill={rgb, 255:red, 0; green, 0; blue, 0 }  ,fill opacity=1 ] (220,35.5) .. controls (220,34.12) and (218.88,33) .. (217.5,33) .. controls (216.12,33) and (215,34.12) .. (215,35.5) .. controls (215,36.88) and (216.12,38) .. (217.5,38) .. controls (218.88,38) and (220,36.88) .. (220,35.5) -- cycle ;
\draw    (217.5,35.5) -- (247.5,107.5) ;
\draw  [fill={rgb, 255:red, 0; green, 0; blue, 0 }  ,fill opacity=1 ] (280,107.5) .. controls (280,106.12) and (278.88,105) .. (277.5,105) .. controls (276.12,105) and (275,106.12) .. (275,107.5) .. controls (275,108.88) and (276.12,110) .. (277.5,110) .. controls (278.88,110) and (280,108.88) .. (280,107.5) -- cycle ;
\draw  [fill={rgb, 255:red, 0; green, 0; blue, 0 }  ,fill opacity=1 ] (310,107.5) .. controls (310,106.12) and (308.88,105) .. (307.5,105) .. controls (306.12,105) and (305,106.12) .. (305,107.5) .. controls (305,108.88) and (306.12,110) .. (307.5,110) .. controls (308.88,110) and (310,108.88) .. (310,107.5) -- cycle ;
\draw  [fill={rgb, 255:red, 0; green, 0; blue, 0 }  ,fill opacity=1 ] (340,107.5) .. controls (340,106.12) and (338.88,105) .. (337.5,105) .. controls (336.12,105) and (335,106.12) .. (335,107.5) .. controls (335,108.88) and (336.12,110) .. (337.5,110) .. controls (338.88,110) and (340,108.88) .. (340,107.5) -- cycle ;
\draw  [fill={rgb, 255:red, 0; green, 0; blue, 0 }  ,fill opacity=1 ] (280,35.5) .. controls (280,34.12) and (278.88,33) .. (277.5,33) .. controls (276.12,33) and (275,34.12) .. (275,35.5) .. controls (275,36.88) and (276.12,38) .. (277.5,38) .. controls (278.88,38) and (280,36.88) .. (280,35.5) -- cycle ;
\draw  [fill={rgb, 255:red, 0; green, 0; blue, 0 }  ,fill opacity=1 ] (310,35.5) .. controls (310,34.12) and (308.88,33) .. (307.5,33) .. controls (306.12,33) and (305,34.12) .. (305,35.5) .. controls (305,36.88) and (306.12,38) .. (307.5,38) .. controls (308.88,38) and (310,36.88) .. (310,35.5) -- cycle ;
\draw    (307.5,35.5) -- (277.5,107.5) ;
\draw    (307.5,35.5) -- (337.5,105) ;
\draw  [fill={rgb, 255:red, 0; green, 0; blue, 0 }  ,fill opacity=1 ] (340,35.5) .. controls (340,34.12) and (338.88,33) .. (337.5,33) .. controls (336.12,33) and (335,34.12) .. (335,35.5) .. controls (335,36.88) and (336.12,38) .. (337.5,38) .. controls (338.88,38) and (340,36.88) .. (340,35.5) -- cycle ;
\draw  [fill={rgb, 255:red, 0; green, 0; blue, 0 }  ,fill opacity=1 ] (370,35.5) .. controls (370,34.12) and (368.88,33) .. (367.5,33) .. controls (366.12,33) and (365,34.12) .. (365,35.5) .. controls (365,36.88) and (366.12,38) .. (367.5,38) .. controls (368.88,38) and (370,36.88) .. (370,35.5) -- cycle ;
\draw    (337.5,35.5) -- (307.5,107.5) ;
\draw    (367.5,35.5) -- (337.5,107.5) ;
\draw    (337.5,35.5) -- (392,112) ;
\draw    (367.5,35.5) -- (399,112) ;
\draw  [fill={rgb, 255:red, 0; green, 0; blue, 0 }  ,fill opacity=1 ] (460,107.5) .. controls (460,106.12) and (458.88,105) .. (457.5,105) .. controls (456.12,105) and (455,106.12) .. (455,107.5) .. controls (455,108.88) and (456.12,110) .. (457.5,110) .. controls (458.88,110) and (460,108.88) .. (460,107.5) -- cycle ;
\draw  [fill={rgb, 255:red, 0; green, 0; blue, 0 }  ,fill opacity=1 ] (490,107.5) .. controls (490,106.12) and (488.88,105) .. (487.5,105) .. controls (486.12,105) and (485,106.12) .. (485,107.5) .. controls (485,108.88) and (486.12,110) .. (487.5,110) .. controls (488.88,110) and (490,108.88) .. (490,107.5) -- cycle ;
\draw    (420,30) -- (406,112) ;
\draw    (427,35) -- (457.5,107.5) ;
\draw    (434,35) -- (487.5,107.5) ;
\draw  [fill={rgb, 255:red, 0; green, 0; blue, 0 }  ,fill opacity=1 ] (490,35.5) .. controls (490,34.12) and (488.88,33) .. (487.5,33) .. controls (486.12,33) and (485,34.12) .. (485,35.5) .. controls (485,36.88) and (486.12,38) .. (487.5,38) .. controls (488.88,38) and (490,36.88) .. (490,35.5) -- cycle ;
\draw    (487.5,35.5) -- (457.5,107.5) ;
\draw    (487.5,35.5) -- (487.5,107.5) ;
\draw  [fill={rgb, 255:red, 0; green, 0; blue, 0 }  ,fill opacity=1 ] (250,35.5) .. controls (250,34.12) and (248.88,33) .. (247.5,33) .. controls (246.12,33) and (245,34.12) .. (245,35.5) .. controls (245,36.88) and (246.12,38) .. (247.5,38) .. controls (248.88,38) and (250,36.88) .. (250,35.5) -- cycle ;
\draw  [fill={rgb, 255:red, 0; green, 0; blue, 0 }  ,fill opacity=1 ] (250,107.5) .. controls (250,106.12) and (248.88,105) .. (247.5,105) .. controls (246.12,105) and (245,106.12) .. (245,107.5) .. controls (245,108.88) and (246.12,110) .. (247.5,110) .. controls (248.88,110) and (250,108.88) .. (250,107.5) -- cycle ;
\draw    (217.5,35.5) -- (250,35.5) ;
\draw    (247.5,35.5) -- (247.5,105) ;
\draw    (277.5,35.5) -- (247.5,107.5) ;
\draw    (247.5,33) -- (277.5,107.5) ;
\draw    (277.5,35.5) -- (307.5,107.5) ;

\draw (240,110) node [anchor=north west][inner sep=0.75pt]   [align=left] {$v_3$};
\draw (270,110) node [anchor=north west][inner sep=0.75pt]   [align=left] {$y_1$};
\draw (300,110) node [anchor=north west][inner sep=0.75pt]   [align=left] {$y_2$};
\draw (300,20) node [anchor=north west][inner sep=0.75pt]  [xslant=0.01] [align=left] {$x_2$};
\draw (330,20) node [anchor=north west][inner sep=0.75pt]  [xslant=0.01] [align=left] {$x_3$};
\draw (360,20) node [anchor=north west][inner sep=0.75pt]  [xslant=0.01] [align=left] {$x_4$};
\draw (330,110) node [anchor=north west][inner sep=0.75pt]   [align=left] {$y_3$};
\draw (390,110) node [anchor=north west][inner sep=0.75pt]   [align=left] {$\cdots$};
\draw (450,110) node [anchor=north west][inner sep=0.75pt]   [align=left] {$y_{k-2}$};
\draw (480,110) node [anchor=north west][inner sep=0.75pt]   [align=left] {$y_{k-1}$};
\draw (414,23) node [anchor=north west][inner sep=0.75pt]   [align=left] {$\cdots$};
\draw (470,20) node [anchor=north west][inner sep=0.75pt]   [align=left] {$x_{k-1}$};
\draw (183,88) node [anchor=north west][inner sep=0.75pt]   [align=left] {$S_2$};
\draw (501,30) node [anchor=north west][inner sep=0.75pt]   [align=left] {$L_1$};
\draw (505,88) node [anchor=north west][inner sep=0.75pt]   [align=left] {$L_2$};
\draw (341,142) node [anchor=north west][inner sep=0.75pt]   [align=left] {Figure 1};
\draw (207,20) node [anchor=north west][inner sep=0.75pt]  [xslant=0.01] [align=left] {$v_1$};
\draw (237,20) node [anchor=north west][inner sep=0.75pt]  [xslant=0.01] [align=left] {$v_2$};
\draw (267,20) node [anchor=north west][inner sep=0.75pt]  [xslant=0.01] [align=left] {$x_1$};
\draw (183,30) node [anchor=north west][inner sep=0.75pt]  [xslant=0.01] [align=left] {$S_1$};

\end{tikzpicture}

\end{center}
Now, we will consider the cases \ref{lab:2} and \ref{lab:3}. Let $\mathcal{N} = N(v_1) \cup N(v_2) \cup N(v_3)$.
\begin{claim}\label{small vertex}
$|\mathcal{N} \cap \mathcal{L}| \leq 1$.
\end{claim}
\begin{proof}
Suppose on the contrary that $|\mathcal{N} \cap \mathcal{L}| \geq 2$.
For \ref{lab:2}, we can only consider $v_2 \in S_1$, $v_3 \in S_1$ and $v_1 \in L_1$ because the proof of other case is similar. Suppose that there exists a vertex $x_1$ in $L_1$ adjacent to $v_2$. By Claim \ref{fact common}, we can find two different vertices $y_1$, $y_2$ in $L_2$ such that $y_1$ and $y_2$ are adjacent to $x_1$ and $v_1$ separately. we also can find two different vertices $x_2$, $x_3$ in $L_1$ such that $y_1$ and $y_2$ are adjacent to $x_2$ and $x_3$ separately. Do the operation repeatedly until the last step, we choose a common neighbour $x_{k-1}$ in $L_1$ of $y_{k-2}$ and $y_{k-1}$. Then $x_1y_1x_2 \cdots x_3y_2v_1v_3v_2$ is a copy of $C_{2k+1}$(see Figure 2), a contradiction.
\begin{center}

\tikzset{every picture/.style={line width=0.75pt}} 

\begin{tikzpicture}[x=0.75pt,y=0.75pt,yscale=-1,xscale=1]

\draw   (183,14.7) .. controls (183,10.17) and (186.67,6.5) .. (191.2,6.5) -- (523.8,6.5) .. controls (528.33,6.5) and (532,10.17) .. (532,14.7) -- (532,39.3) .. controls (532,43.83) and (528.33,47.5) .. (523.8,47.5) -- (191.2,47.5) .. controls (186.67,47.5) and (183,43.83) .. (183,39.3) -- cycle ;
\draw   (181.2,94) .. controls (181.2,89.58) and (184.78,86) .. (189.2,86) -- (522,86) .. controls (526.42,86) and (530,89.58) .. (530,94) -- (530,118) .. controls (530,122.42) and (526.42,126) .. (522,126) -- (189.2,126) .. controls (184.78,126) and (181.2,122.42) .. (181.2,118) -- cycle ;
\draw    (231,5.8) -- (231,47.8) ;
\draw    (231.2,86.6) -- (231.2,126.6) ;
\draw  [fill={rgb, 255:red, 0; green, 0; blue, 0 }  ,fill opacity=1 ] (225,11.5) .. controls (225,10.12) and (223.88,9) .. (222.5,9) .. controls (221.12,9) and (220,10.12) .. (220,11.5) .. controls (220,12.88) and (221.12,14) .. (222.5,14) .. controls (223.88,14) and (225,12.88) .. (225,11.5) -- cycle ;
\draw  [fill={rgb, 255:red, 0; green, 0; blue, 0 }  ,fill opacity=1 ] (280,107.5) .. controls (280,106.12) and (278.88,105) .. (277.5,105) .. controls (276.12,105) and (275,106.12) .. (275,107.5) .. controls (275,108.88) and (276.12,110) .. (277.5,110) .. controls (278.88,110) and (280,108.88) .. (280,107.5) -- cycle ;
\draw  [fill={rgb, 255:red, 0; green, 0; blue, 0 }  ,fill opacity=1 ] (310,107.5) .. controls (310,106.12) and (308.88,105) .. (307.5,105) .. controls (306.12,105) and (305,106.12) .. (305,107.5) .. controls (305,108.88) and (306.12,110) .. (307.5,110) .. controls (308.88,110) and (310,108.88) .. (310,107.5) -- cycle ;
\draw  [fill={rgb, 255:red, 0; green, 0; blue, 0 }  ,fill opacity=1 ] (340,107.5) .. controls (340,106.12) and (338.88,105) .. (337.5,105) .. controls (336.12,105) and (335,106.12) .. (335,107.5) .. controls (335,108.88) and (336.12,110) .. (337.5,110) .. controls (338.88,110) and (340,108.88) .. (340,107.5) -- cycle ;
\draw  [fill={rgb, 255:red, 0; green, 0; blue, 0 }  ,fill opacity=1 ] (280,35.5) .. controls (280,34.12) and (278.88,33) .. (277.5,33) .. controls (276.12,33) and (275,34.12) .. (275,35.5) .. controls (275,36.88) and (276.12,38) .. (277.5,38) .. controls (278.88,38) and (280,36.88) .. (280,35.5) -- cycle ;
\draw  [fill={rgb, 255:red, 0; green, 0; blue, 0 }  ,fill opacity=1 ] (310,35.5) .. controls (310,34.12) and (308.88,33) .. (307.5,33) .. controls (306.12,33) and (305,34.12) .. (305,35.5) .. controls (305,36.88) and (306.12,38) .. (307.5,38) .. controls (308.88,38) and (310,36.88) .. (310,35.5) -- cycle ;
\draw    (307.5,35.5) -- (277.5,107.5) ;
\draw    (307.5,35.5) -- (337.5,105) ;
\draw  [fill={rgb, 255:red, 0; green, 0; blue, 0 }  ,fill opacity=1 ] (340,35.5) .. controls (340,34.12) and (338.88,33) .. (337.5,33) .. controls (336.12,33) and (335,34.12) .. (335,35.5) .. controls (335,36.88) and (336.12,38) .. (337.5,38) .. controls (338.88,38) and (340,36.88) .. (340,35.5) -- cycle ;
\draw  [fill={rgb, 255:red, 0; green, 0; blue, 0 }  ,fill opacity=1 ] (225,35.5) .. controls (225,34.12) and (223.88,33) .. (222.5,33) .. controls (221.12,33) and (220,34.12) .. (220,35.5) .. controls (220,36.88) and (221.12,38) .. (222.5,38) .. controls (223.88,38) and (225,36.88) .. (225,35.5) -- cycle ;
\draw    (337.5,35.5) -- (307.5,107.5) ;
\draw    (367.5,35.5) -- (337.5,107.5) ;
\draw    (337.5,35.5) -- (392,105) ;
\draw    (367.5,33) -- (399,105) ;
\draw  [fill={rgb, 255:red, 0; green, 0; blue, 0 }  ,fill opacity=1 ] (460,107.5) .. controls (460,106.12) and (458.88,105) .. (457.5,105) .. controls (456.12,105) and (455,106.12) .. (455,107.5) .. controls (455,108.88) and (456.12,110) .. (457.5,110) .. controls (458.88,110) and (460,108.88) .. (460,107.5) -- cycle ;
\draw  [fill={rgb, 255:red, 0; green, 0; blue, 0 }  ,fill opacity=1 ] (490,107.5) .. controls (490,106.12) and (488.88,105) .. (487.5,105) .. controls (486.12,105) and (485,106.12) .. (485,107.5) .. controls (485,108.88) and (486.12,110) .. (487.5,110) .. controls (488.88,110) and (490,108.88) .. (490,107.5) -- cycle ;
\draw    (420,35) -- (406,105) ;
\draw    (427,35) -- (457.5,107.5) ;
\draw    (434,35) -- (487.5,107.5) ;
\draw  [fill={rgb, 255:red, 0; green, 0; blue, 0 }  ,fill opacity=1 ] (490,35.5) .. controls (490,34.12) and (488.88,33) .. (487.5,33) .. controls (486.12,33) and (485,34.12) .. (485,35.5) .. controls (485,36.88) and (486.12,38) .. (487.5,38) .. controls (488.88,38) and (490,36.88) .. (490,35.5) -- cycle ;
\draw    (487.5,35.5) -- (457.5,107.5) ;
\draw    (487.5,35.5) -- (487.5,107.5) ;
\draw  [fill={rgb, 255:red, 0; green, 0; blue, 0 }  ,fill opacity=1 ] (250,35.5) .. controls (250,34.12) and (248.88,33) .. (247.5,33) .. controls (246.12,33) and (245,34.12) .. (245,35.5) .. controls (245,36.88) and (246.12,38) .. (247.5,38) .. controls (248.88,38) and (250,36.88) .. (250,35.5) -- cycle ;
\draw  [fill={rgb, 255:red, 0; green, 0; blue, 0 }  ,fill opacity=1 ] (250,107.5) .. controls (250,106.12) and (248.88,105) .. (247.5,105) .. controls (246.12,105) and (245,106.12) .. (245,107.5) .. controls (245,108.88) and (246.12,110) .. (247.5,110) .. controls (248.88,110) and (250,108.88) .. (250,107.5) -- cycle ;
\draw    (222.5,11.5) -- (222.5,35.5) ;
\draw    (277.5,35.5) -- (247.5,107.5) ;
\draw    (247.5,35.5) -- (247.5,107.5) ;
\draw    (277.5,35.5) -- (307.5,107.5) ;
\draw  [fill={rgb, 255:red, 0; green, 0; blue, 0 }  ,fill opacity=1 ] (264,11.5) .. controls (264,10.12) and (262.88,9) .. (261.5,9) .. controls (260.12,9) and (259,10.12) .. (259,11.5) .. controls (259,12.88) and (260.12,14) .. (261.5,14) .. controls (262.88,14) and (264,12.88) .. (264,11.5) -- cycle ;
\draw    (261.5,11.5) -- (222.5,11.5) ;
\draw    (222.5,35.5) -- (261.5,11.5) ;
\draw    (222.5,35.5) -- (247.5,35.5) ;
\draw    (261.5,11.5) -- (277.5,107.5) ;
\draw  [fill={rgb, 255:red, 0; green, 0; blue, 0 }  ,fill opacity=1 ] (370,35.5) .. controls (370,34.12) and (368.88,33) .. (367.5,33) .. controls (366.12,33) and (365,34.12) .. (365,35.5) .. controls (365,36.88) and (366.12,38) .. (367.5,38) .. controls (368.88,38) and (370,36.88) .. (370,35.5) -- cycle ;

\draw (240,110) node [anchor=north west][inner sep=0.75pt]   [align=left] {$y_1$};
\draw (270,110) node [anchor=north west][inner sep=0.75pt]   [align=left] {$y_2$};
\draw (300,110) node [anchor=north west][inner sep=0.75pt]   [align=left] {$y_3$};
\draw (298,20) node [anchor=north west][inner sep=0.75pt]  [xslant=0.01] [align=left] {$x_3$};
\draw (325,20) node [anchor=north west][inner sep=0.75pt]  [xslant=0.01] [align=left] {$x_4$};
\draw (364,20) node [anchor=north west][inner sep=0.75pt]  [xslant=0.01] [align=left] {$x_5$};
\draw (328,110) node [anchor=north west][inner sep=0.75pt]   [align=left] {$y_4$};
\draw (390,105) node [anchor=north west][inner sep=0.75pt]   [align=left] {$\cdots$};
\draw (450,110) node [anchor=north west][inner sep=0.75pt]   [align=left] {$y_{k-2}$};
\draw (480,110) node [anchor=north west][inner sep=0.75pt]   [align=left] {$y_{k-1}$};
\draw (415,22) node [anchor=north west][inner sep=0.75pt]   [align=left] {$\cdots$};
\draw (475,20) node [anchor=north west][inner sep=0.75pt]   [align=left] {$x_{k-1}$};
\draw (183,90) node [anchor=north west][inner sep=0.75pt]   [align=left] {$S_2$};
\draw (501,30) node [anchor=north west][inner sep=0.75pt]   [align=left] {$L_1$};
\draw (505,90) node [anchor=north west][inner sep=0.75pt]   [align=left] {$L_2$};
\draw (341,142) node [anchor=north west][inner sep=0.75pt]   [align=left] {Figure 2};
\draw (205,8) node [anchor=north west][inner sep=0.75pt]  [xslant=0.01] [align=left] {$v_3$};
\draw (240,20) node [anchor=north west][inner sep=0.75pt]  [xslant=0.01] [align=left] {$x_1$};
\draw (265,20) node [anchor=north west][inner sep=0.75pt]  [xslant=0.01] [align=left] {$x_2$};
\draw (183.15,30) node [anchor=north west][inner sep=0.75pt]  [xslant=0.01] [align=left] {$S_1$};
\draw (205,27) node [anchor=north west][inner sep=0.75pt]  [xslant=0.01] [align=left] {$v_2$};
\draw (266,8) node [anchor=north west][inner sep=0.75pt]  [xslant=0.01] [align=left] {$v_1$};

\end{tikzpicture}

\end{center}

For \ref{lab:3}, we can assume that $\{v_1, v_2, v_3\} \subseteq S_1$ because the proof of other case is similar. Suppose $v_1$, $v_2$ are adjacent to $x_1$, $x_2$ in $L_1$ separately. By Claim \ref{fact common}, we can find two different vertices $y_1$, $y_2$ in $L_2$ such that $y_1$ and $y_2$ are adjacent to $x_1$ and $x_2$ separately. Do the operation repeatedly and until the last step, we choose the common neighbour $y_{k-1}$ in $L_2$ of $x_{k-1}$ and $x_k$. Then $x_1y_1x_3y_3 \cdots y_2x_2v_2v_1$ is a copy of $C_{2k+1}$(see Figure 3), a contradiction.

\begin{center}

\tikzset{every picture/.style={line width=0.75pt}} 

\begin{tikzpicture}[x=0.75pt,y=0.75pt,yscale=-1,xscale=1]

\draw   (183,14.7) .. controls (183,10.17) and (186.67,6.5) .. (191.2,6.5) -- (523.8,6.5) .. controls (528.33,6.5) and (532,10.17) .. (532,14.7) -- (532,39.3) .. controls (532,43.83) and (528.33,47.5) .. (523.8,47.5) -- (191.2,47.5) .. controls (186.67,47.5) and (183,43.83) .. (183,39.3) -- cycle ;
\draw   (181.2,94) .. controls (181.2,89.58) and (184.78,86) .. (189.2,86) -- (522,86) .. controls (526.42,86) and (530,89.58) .. (530,94) -- (530,118) .. controls (530,122.42) and (526.42,126) .. (522,126) -- (189.2,126) .. controls (184.78,126) and (181.2,122.42) .. (181.2,118) -- cycle ;
\draw    (231,5.8) -- (231,47.8) ;
\draw    (231.2,86.6) -- (231.2,126.6) ;
\draw  [fill={rgb, 255:red, 0; green, 0; blue, 0 }  ,fill opacity=1 ] (225,11.5) .. controls (225,10.12) and (223.88,9) .. (222.5,9) .. controls (221.12,9) and (220,10.12) .. (220,11.5) .. controls (220,12.88) and (221.12,14) .. (222.5,14) .. controls (223.88,14) and (225,12.88) .. (225,11.5) -- cycle ;
\draw  [fill={rgb, 255:red, 0; green, 0; blue, 0 }  ,fill opacity=1 ] (280,107.5) .. controls (280,106.12) and (278.88,105) .. (277.5,105) .. controls (276.12,105) and (275,106.12) .. (275,107.5) .. controls (275,108.88) and (276.12,110) .. (277.5,110) .. controls (278.88,110) and (280,108.88) .. (280,107.5) -- cycle ;
\draw  [fill={rgb, 255:red, 0; green, 0; blue, 0 }  ,fill opacity=1 ] (310,107.5) .. controls (310,106.12) and (308.88,105) .. (307.5,105) .. controls (306.12,105) and (305,106.12) .. (305,107.5) .. controls (305,108.88) and (306.12,110) .. (307.5,110) .. controls (308.88,110) and (310,108.88) .. (310,107.5) -- cycle ;
\draw  [fill={rgb, 255:red, 0; green, 0; blue, 0 }  ,fill opacity=1 ] (340,107.5) .. controls (340,106.12) and (338.88,105) .. (337.5,105) .. controls (336.12,105) and (335,106.12) .. (335,107.5) .. controls (335,108.88) and (336.12,110) .. (337.5,110) .. controls (338.88,110) and (340,108.88) .. (340,107.5) -- cycle ;
\draw  [fill={rgb, 255:red, 0; green, 0; blue, 0 }  ,fill opacity=1 ] (280,35.5) .. controls (280,34.12) and (278.88,33) .. (277.5,33) .. controls (276.12,33) and (275,34.12) .. (275,35.5) .. controls (275,36.88) and (276.12,38) .. (277.5,38) .. controls (278.88,38) and (280,36.88) .. (280,35.5) -- cycle ;
\draw  [fill={rgb, 255:red, 0; green, 0; blue, 0 }  ,fill opacity=1 ] (310,35.5) .. controls (310,34.12) and (308.88,33) .. (307.5,33) .. controls (306.12,33) and (305,34.12) .. (305,35.5) .. controls (305,36.88) and (306.12,38) .. (307.5,38) .. controls (308.88,38) and (310,36.88) .. (310,35.5) -- cycle ;
\draw    (307.5,35.5) -- (277.5,107.5) ;
\draw    (307.5,35.5) -- (337.5,105) ;
\draw  [fill={rgb, 255:red, 0; green, 0; blue, 0 }  ,fill opacity=1 ] (340,35.5) .. controls (340,34.12) and (338.88,33) .. (337.5,33) .. controls (336.12,33) and (335,34.12) .. (335,35.5) .. controls (335,36.88) and (336.12,38) .. (337.5,38) .. controls (338.88,38) and (340,36.88) .. (340,35.5) -- cycle ;
\draw  [fill={rgb, 255:red, 0; green, 0; blue, 0 }  ,fill opacity=1 ] (225,35.5) .. controls (225,34.12) and (223.88,33) .. (222.5,33) .. controls (221.12,33) and (220,34.12) .. (220,35.5) .. controls (220,36.88) and (221.12,38) .. (222.5,38) .. controls (223.88,38) and (225,36.88) .. (225,35.5) -- cycle ;
\draw    (337.5,35.5) -- (307.5,107.5) ;
\draw    (367.5,35.5) -- (337.5,107.5) ;
\draw    (337.5,35.5) -- (392,105) ;
\draw    (367.5,33) -- (399,105) ;
\draw  [fill={rgb, 255:red, 0; green, 0; blue, 0 }  ,fill opacity=1 ] (462,38) .. controls (462,36.62) and (460.88,35.5) .. (459.5,35.5) .. controls (458.12,35.5) and (457,36.62) .. (457,38) .. controls (457,39.38) and (458.12,40.5) .. (459.5,40.5) .. controls (460.88,40.5) and (462,39.38) .. (462,38) -- cycle ;
\draw  [fill={rgb, 255:red, 0; green, 0; blue, 0 }  ,fill opacity=1 ] (490,107.5) .. controls (490,106.12) and (488.88,105) .. (487.5,105) .. controls (486.12,105) and (485,106.12) .. (485,107.5) .. controls (485,108.88) and (486.12,110) .. (487.5,110) .. controls (488.88,110) and (490,108.88) .. (490,107.5) -- cycle ;
\draw    (420,35) -- (406,105) ;
\draw    (459.5,38) -- (487.5,107.5) ;
\draw  [fill={rgb, 255:red, 0; green, 0; blue, 0 }  ,fill opacity=1 ] (490,35.5) .. controls (490,34.12) and (488.88,33) .. (487.5,33) .. controls (486.12,33) and (485,34.12) .. (485,35.5) .. controls (485,36.88) and (486.12,38) .. (487.5,38) .. controls (488.88,38) and (490,36.88) .. (490,35.5) -- cycle ;
\draw    (487.5,35.5) -- (487.5,107.5) ;
\draw  [fill={rgb, 255:red, 0; green, 0; blue, 0 }  ,fill opacity=1 ] (250,35.5) .. controls (250,34.12) and (248.88,33) .. (247.5,33) .. controls (246.12,33) and (245,34.12) .. (245,35.5) .. controls (245,36.88) and (246.12,38) .. (247.5,38) .. controls (248.88,38) and (250,36.88) .. (250,35.5) -- cycle ;
\draw  [fill={rgb, 255:red, 0; green, 0; blue, 0 }  ,fill opacity=1 ] (250,107.5) .. controls (250,106.12) and (248.88,105) .. (247.5,105) .. controls (246.12,105) and (245,106.12) .. (245,107.5) .. controls (245,108.88) and (246.12,110) .. (247.5,110) .. controls (248.88,110) and (250,108.88) .. (250,107.5) -- cycle ;
\draw    (222.5,11.5) -- (222.5,35.5) ;
\draw    (277.5,35.5) -- (247.5,107.5) ;
\draw    (247.5,35.5) -- (247.5,107.5) ;
\draw    (277.5,35.5) -- (307.5,107.5) ;
\draw  [fill={rgb, 255:red, 0; green, 0; blue, 0 }  ,fill opacity=1 ] (264,11.5) .. controls (264,10.12) and (262.88,9) .. (261.5,9) .. controls (260.12,9) and (259,10.12) .. (259,11.5) .. controls (259,12.88) and (260.12,14) .. (261.5,14) .. controls (262.88,14) and (264,12.88) .. (264,11.5) -- cycle ;
\draw    (261.5,11.5) -- (222.5,11.5) ;
\draw    (222.5,35.5) -- (247.5,35.5) ;
\draw    (261.5,11.5) -- (277.5,107.5) ;
\draw  [fill={rgb, 255:red, 0; green, 0; blue, 0 }  ,fill opacity=1 ] (370,35.5) .. controls (370,34.12) and (368.88,33) .. (367.5,33) .. controls (366.12,33) and (365,34.12) .. (365,35.5) .. controls (365,36.88) and (366.12,38) .. (367.5,38) .. controls (368.88,38) and (370,36.88) .. (370,35.5) -- cycle ;
\draw  [fill={rgb, 255:red, 0; green, 0; blue, 0 }  ,fill opacity=1 ] (201,23.5) .. controls (201,22.12) and (199.88,21) .. (198.5,21) .. controls (197.12,21) and (196,22.12) .. (196,23.5) .. controls (196,24.88) and (197.12,26) .. (198.5,26) .. controls (199.88,26) and (201,24.88) .. (201,23.5) -- cycle ;
\draw    (198.5,23.5) -- (222.5,11.5) ;
\draw    (198.5,23.5) -- (222.5,35.5) ;

\draw (238,109) node [anchor=north west][inner sep=0.75pt]   [align=left] {$y_1$};
\draw (270,109) node [anchor=north west][inner sep=0.75pt]   [align=left] {$y_2$};
\draw (296,109) node [anchor=north west][inner sep=0.75pt]   [align=left] {$y_3$};
\draw (297,20) node [anchor=north west][inner sep=0.75pt]  [xslant=0.01] [align=left] {$x_4$};
\draw (326,20) node [anchor=north west][inner sep=0.75pt]  [xslant=0.01] [align=left] {$x_5$};
\draw (363,20) node [anchor=north west][inner sep=0.75pt]  [xslant=0.01] [align=left] {$x_6$};
\draw (325,109) node [anchor=north west][inner sep=0.75pt]   [align=left] {$y_4$};
\draw (390,105) node [anchor=north west][inner sep=0.75pt]   [align=left] {$\cdots$};
\draw (446,20) node [anchor=north west][inner sep=0.75pt]   [align=left] {$x_{k-1}$};
\draw (480,109) node [anchor=north west][inner sep=0.75pt]   [align=left] {$y_{k-1}$};
\draw (409,23) node [anchor=north west][inner sep=0.75pt]   [align=left] {$\cdots$};
\draw (480,20) node [anchor=north west][inner sep=0.75pt]   [align=left] {$x_k$};
\draw (183,88) node [anchor=north west][inner sep=0.75pt]   [align=left] {$S_2$};
\draw (501,30) node [anchor=north west][inner sep=0.75pt]   [align=left] {$L_1$};
\draw (505,88) node [anchor=north west][inner sep=0.75pt]   [align=left] {$L_2$};
\draw (341,142) node [anchor=north west][inner sep=0.75pt]   [align=left] {Figure 3};
\draw (237,20) node [anchor=north west][inner sep=0.75pt]  [xslant=0.01] [align=left] {$x_1$};
\draw (266,20) node [anchor=north west][inner sep=0.75pt]  [xslant=0.01] [align=left] {$x_3$};
\draw (202,30) node [anchor=north west][inner sep=0.75pt]  [xslant=0.01] [align=left] {$v_1$};
\draw (202,8) node [anchor=north west][inner sep=0.75pt]  [xslant=0.01] [align=left] {$v_2$};
\draw (183,14) node [anchor=north west][inner sep=0.75pt]  [xslant=0.01] [align=left] {$v_3$};
\draw (183.15,30) node [anchor=north west][inner sep=0.75pt]  [xslant=0.01] [align=left] {$S_1$};
\draw (265,6) node [anchor=north west][inner sep=0.75pt]  [xslant=0.01] [align=left] {$x_2$};

\end{tikzpicture}

\end{center}
Thus, $|\mathcal{N} \cap \mathcal{L}| \leq 1$.
\end{proof}
By Claim \ref{small vertex}, there exists a vertex $v \in \{v_1, v_2, v_3\}$ such that $|N(v) \cap \mathcal{L}| \leq 1$. Since $|S_i| \leq \frac{|V_i|}{4k}$ for $i \in [2]$, we have $d(v) \leq |S_1| + |S_2| + 1 \leq \frac{n}{4k} +1 < \frac{n}{2k} -1$ for $n \geq 8k = n_2$. Thus, if there exists a triangle in $G_n$, then there exists at least one vertex $v \in G_n$ such that $d(v) < \frac{n}{2k} -1$. Because $G_n$ is $C_{2k+1}$-free, $G_n[N(v)]$ is $P_{2k}$-free which implies that the number of edges in $G_n[N(v)]$ is at most $ (k-1)d(v)$ by Theorem \ref{59EG}. If we delete $v$ from $G_n$, it will destroy at most $(k-1)d(v)$ triangles and delete $d(v)$ edges. Fix $n^\ast = \max\{n_1, n_2\}$ and set $G' = G_n - v$. For all $n \geq n^\ast$ we can obtain $f(n-1) \geq f(G') = e(G') + t(G') - \left\lfloor\frac{(n-1)^2}{4}\right\rfloor$. Hence,
\begin{align*}
f(n-1) - f(n)&\geq e(G') + t(G') - \left\lfloor\frac{(n-1)^2}{4}\right\rfloor - \left(e(G_n) + t(G_n) - \left\lfloor\frac{n^2}{4}\right\rfloor \right) \\
&\geq \frac{2n-2}{4} - kd(v) \\
&\geq \frac{2n-2}{4} - k\left(\frac{n}{2k}-1\right) \\
&> k-1.
\end{align*}
Let $n_0 = (n^\ast)^3$. Next we prove that $G_{n_0}$ contains no triangle. Suppose $G_{n_0}$ contains a triangle, then there exists a graph $G_{n_0-1}$ on $n_0-1$ vertices with $f(n_0-1) = f(G_{n_0-1})$ and $G_{n_0-1}$ contains a triangle. Otherwise, if for every graph $G_{n_0-1}$ on $n_0-1$ vertices with $f(n_0-1) = f(G_{n_0-1})$ is triangle-free, then we have
\begin{align*}
  0 &\leq f(n_0) < f(n_0-1) = f(G_{n_0-1}) = e(G_{n_0-1}) - \left\lfloor\frac{(n_0-1)^2}{4}\right\rfloor=0,
\end{align*}
a contradiction. Thus, each graph in $\{G_{n_0}, G_{n_0-1}, \cdots, G_{n^\ast}\}$ contains a triangle. But by using the inequality \eqref{fn} iteratively, we can get
\begin{align*}
0 \leq f(n_0) &\leq f(n_0 -1) -(k-1) \leq f(n^\ast)- (n_0 - n^\ast)(k-1) \notag \\
& \leq \binom{n^\ast}{2} + \binom{n^\ast}{3} - n_0(k-1) + n^\ast(k-1) < 0,
\end{align*}
a contradiction. Therefore, $G_{n_0}$ is triangle-free.

Next, our goal is to prove that $G_n$ is triangle-free for every $n \geq n_0$. Otherwise, let $n_3$ be the first integer after $n_0$ such that $G_{n_3}$ contains a triangle. Then $G_{n_3-1}$ contains no triangle. But $0 \leq f(n_3) < f(n_3-1) - (k-1) < 0$ by \eqref{fn}, a contradiction. Thus $G_n$ must be triangle-free for each $n \geq n_0$. So $e(G_n) + t(G_n) = e(G_n) = \left\lfloor\frac{n^2}{4}\right\rfloor$ and $G_n = T_2(n)$ by Tur\'{a}n's theorem.
\end{proof}

\section{Proof of Theorem ~\ref{main-theorem}}\label{SEC:pf-main-thm}
We prove Theorem~\ref{main-theorem} in this section.
\begin{proof}[Proof of Theorem~\ref{main-theorem}]
Let $H = K_\ell \uproduct T_2(n-\ell)$. It is easy to see that $H$ is $(\ell+1)\cdot C_{2k+1}$-free and
\begin{align}\label{lower bound of odd cycle}
t(H) = \ell\left\lfloor\frac{(n-\ell)^2}{4}\right\rfloor + (n-\ell)\binom{\ell}{2} + \binom{\ell}{3}.
\end{align}
Let $G$ be a $(\ell+1) \cdot C_{2k+1}$-free maximal graph on $n$ vertices with $t(G)\ge t(H)$. Clearly, there exists a copy of $\ell \cdot C_{2k+1}$ in $G$. Let $U$ denote the vertex set of such a copy of $\ell \cdot C_{2k+1}$ and let $W = V(G) \setminus U$. Now, we define an auxiliary 3-graph $\mathcal{H}$ with $V(\mathcal{H}) = V(G)$ and
\begin{center}
$E(\mathcal{H}) := \left\{\{x, y, z\} \subseteq \binom{V}{3}:x, y, z\,\, \text{form a triangle in}\,\, G\right\}.$
\end{center}
We mention that the link graph $L_{\mathcal{H}}(v)\subset G$  for each $v\in V(\mathcal{H})$.

Let
\begin{align*}
L:=\left\{v \in V(\mathcal{H}): d_{{\mathcal{H}}}(v) \geq \frac{n^2}{8|U|}\right\}.
\end{align*}
In what follows, we show that the size of $L$ is exactly $\ell$.
\begin{claim} \label{upper bound L}
$|L| \leq \ell$.
\end{claim}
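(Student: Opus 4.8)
The plan is to argue by contradiction: suppose $|L| \ge \ell+1$ and pick $\ell+1$ vertices $v_1, \dots, v_{\ell+1} \in L$; I want to extract $\ell+1$ pairwise vertex-disjoint copies of $C_{2k+1}$, contradicting that $G$ is $(\ell+1)\cdot C_{2k+1}$-free. The heuristic is that a vertex $v$ with $d_{\mathcal H}(v) \ge n^2/(8|U|)$ has a link graph $L_{\mathcal H}(v) \subseteq G$ that is a genuinely dense graph on roughly $n$ vertices (note $|U| = \ell(2k+1)$ is a constant, so $n^2/(8|U|) = \Omega(n^2)$), and a dense graph contains long paths — in particular paths long enough to be closed into a $C_{2k+1}$ through $v$. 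So each $v_i$ should be the ``apex'' of its own odd cycle, and the real work is to make these cycles disjoint.

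First I would record that for each $v \in L$, $e(L_{\mathcal H}(v)) = d_{\mathcal H}(v) \ge n^2/(8|U|) = \Omega(n^2)$; since a $P_{2k}$-free graph has at most $(k-1)n$ edges by Theorem \ref{59EG}, for $n$ large $L_{\mathcal H}(v)$ contains a path on $2k$ vertices, and in fact contains many such paths. More carefully, I would want: after deleting any constant-sized set of ``forbidden'' vertices from $L_{\mathcal H}(v)$, the remainder still has $\Omega(n^2)$ edges and hence still contains $P_{2k}$. The key observation making this work is that a $P_{2k}$ $u_1 u_2 \cdots u_{2k}$ inside $L_{\mathcal H}(v)$ yields the odd cycle $v u_1 u_2 \cdots u_{2k} v$ of length $2k+1$ in $G$. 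So the strategy is: process $v_1, \dots, v_{\ell+1}$ one at a time; having already built disjoint cycles $D_1, \dots, D_{i-1}$ (each on $2k+1$ vertices, a constant total), delete their vertices together with $v_i, \dots, v_{\ell+1}$ — only $O(1)$ vertices removed — from $L_{\mathcal H}(v_i)$; what remains still has $\Omega(n^2)$ edges, hence contains a $P_{2k}$ avoiding all previously used vertices and all remaining apices, and closing it through $v_i$ gives $D_i$ vertex-disjoint from $D_1, \dots, D_{i-1}$ and not using any $v_j$ with $j > i$. Iterating through all $\ell+1$ apices produces $(\ell+1)\cdot C_{2k+1}$ in $G$, the desired contradiction; hence $|L| \le \ell$.

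The main obstacle is the bookkeeping that after removing the constantly many already-committed vertices the link graph still contains the required path: one must check that each deletion of a single vertex removes at most $n-1$ edges from $L_{\mathcal H}(v_i)$, so removing $O(1)$ vertices costs only $O(n)$ edges, leaving $n^2/(8|U|) - O(n) = \Omega(n^2) \gg (k-1)n$ edges, which forces a $P_{2k}$ by Theorem \ref{59EG}. A minor subtlety is that the closing path must also avoid the not-yet-processed apices $v_{i+1}, \dots, v_{\ell+1}$ so that those remain available as apices later — but these are again only $O(1)$ vertices, absorbed into the same $O(n)$ edge loss. One should also note $v_i$ itself need not lie in $U$ or $W$ in any special way; only the constant bound $|U| = \ell(2k+1)$ and the definition of $L$ are used. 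Everything else is routine once these counting inequalities are in place.
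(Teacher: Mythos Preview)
Your proposal is correct and follows essentially the same approach as the paper: assume $\ell+1$ vertices in $L$, use Theorem~\ref{59EG} on each link graph $L_{\mathcal H}(v_i)$ to find a $P_{2k}$, close it through $v_i$ to a $C_{2k+1}$, and argue the $\ell+1$ cycles can be made pairwise disjoint because each link has $\Theta(n^2)$ edges. Your greedy bookkeeping (delete $O(1)$ already-used vertices and remaining apices, lose only $O(n)$ edges, still exceed $(k-1)n$) is exactly the justification behind the paper's ``Clearly, there is a choice such that these $\ell+1$ copies of $P_{2k}$ are pairwise disjoint,'' so you have in fact filled in a detail the paper leaves implicit.
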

\begin{proof}
Suppose that $|L| \geq \ell+1$ and $\{v_1, \cdots, v_{\ell+1}\}\subseteq L$. By the definition of $L$, we obtain that $e(L_{\mathcal{H}}(v_i))=d_{\mathcal{H}}(v_i) \geq \frac{n^2}{8|U|}$ for each $i\in[\ell+1]$. By Theorem \ref{59EG}, we can find a copy of $P_{2k}$ in each $L_{\mathcal{H}}(v_i)$. Clearly, there is a choice such that this $\ell+1$ copies of $P_{2k}$ are pairwise disjoint as $e(L_{\mathcal{H}}(v_i))=\Theta(n^2)$ and $n$ is sufficiently large. Thus, we have a copy of $(\ell+1)\cdot P_{2k}$ in $G$ as $L_{\mathcal{H}}(v)\subset G$  for each $v\in V(\mathcal{H})$. This together with $\{v_1, \cdots, v_{\ell+1}\}$ forms a copy of $(\ell+1) \cdot C_{2k+1}$ in $G$,  a contradiction.
\end{proof}

Let $L'=L\cap U$. By Claim \ref{upper bound L}, we have $|L'|\le \ell$. Observe that $d_{\mathcal{H}}(v)<\frac{n^2}{8|U|}$ for each $v\in U\setminus L'$. It is easy to see that
\begin{align*}
t(G) &\leq |L'|e(W) +\binom{|L'|}{3} + \binom{|L'|}{2}|W| + \sum_{v \in U\setminus L'} d_{\mathcal{H}}(v) + \mathrm{ex}(|W|, C_3, C_{2k+1})\\
&\leq |L'|e(W) +(|U| - |L'|)\frac{n^2}{8|U|}+o(n^2)\\
&\leq|L'|e(W) +\frac{n^2}{8} + o(n^2).
\end{align*}
Note that $G[W]$ is $C_{2k+1}$-free and $e(W)\le n^2/4$ in view of Theorem \ref{15FG}. It follows that $t(G)<t(H)$ for sufficiently large $n$ providing that $|L|\leq \ell-1$. This leads to a contradiction. Hence, we conclude that $|L|=\ell$.

Write $S:= V(\mathcal{H})\setminus L$.

\begin{claim}\label{GS}
$G[S]$ is $C_{2k+1}$-free.
\end{claim}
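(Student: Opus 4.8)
The plan is to argue by contradiction. Suppose $G[S]$ contains a copy $C$ of $C_{2k+1}$. Since $|L|=\ell$, write $L=\{v_1,\dots,v_\ell\}$, and note that $V(C)\subseteq S=V(\mathcal{H})\setminus L$ implies $V(C)\cap L=\emptyset$. I will construct, one at a time, copies $C_1,\dots,C_\ell$ of $C_{2k+1}$ with $v_i\in V(C_i)$, keeping the $C_i$ pairwise vertex-disjoint and each disjoint from $C$ and from $L\setminus\{v_i\}$; together with $C$ these form a copy of $(\ell+1)\cdot C_{2k+1}$ in $G$, contradicting that $G$ is $(\ell+1)\cdot C_{2k+1}$-free.

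To build $C_i$, recall that $v_i\in L$ means $e(L_{\mathcal{H}}(v_i))=d_{\mathcal{H}}(v_i)\ge \frac{n^2}{8|U|}$, and $|U|$ is a constant, so $L_{\mathcal{H}}(v_i)$ has $\Theta(n^2)$ edges. Let $F_i$ consist of the at most $2k+1$ vertices of $C$, the $\ell-1$ vertices of $L\setminus\{v_i\}$, and the at most $(i-1)(2k+1)$ vertices used by $C_1,\dots,C_{i-1}$; then $|F_i|\le c$ for a constant $c=c(k,\ell)$. Deleting $F_i$ from $L_{\mathcal{H}}(v_i)$ destroys at most $cn$ edges, so the remaining graph still has at least $\frac{n^2}{8|U|}-cn>(k-1)n$ edges once $n$ is large, hence by Theorem \ref{59EG} it contains a path $P=a_1a_2\cdots a_{2k}$ on vertices avoiding $F_i$ (and avoiding $v_i$, which is isolated in $L_{\mathcal{H}}(v_i)$). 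Since $L_{\mathcal{H}}(v_i)\subset G$ and every edge $\{a,b\}$ of $L_{\mathcal{H}}(v_i)$ satisfies $\{a,b,v_i\}\in E(\mathcal{H})$, all edges of $P$ lie in $G$ and $v_ia_1,v_ia_{2k}\in E(G)$. Thus $C_i:=v_ia_1a_2\cdots a_{2k}v_i$ is a copy of $C_{2k+1}$ in $G$ with $V(C_i)=\{v_i\}\cup V(P)$, disjoint from $C$, from $L\setminus\{v_i\}$, and from $C_1,\dots,C_{i-1}$.

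Performing this for $i=1,\dots,\ell$ produces pairwise disjoint copies $C_1,\dots,C_\ell$, each disjoint from $C$, which yields the forbidden $(\ell+1)\cdot C_{2k+1}$ and the desired contradiction; hence $G[S]$ is $C_{2k+1}$-free. The only delicate point is the bookkeeping that keeps the successively chosen paths disjoint, but this is harmless: at each step only a bounded number of vertices are forbidden, while each link graph carries quadratically many edges, so the Erd\H{o}s--Gallai bound still forces a $P_{2k}$ after the deletions. This is essentially the mechanism already used in the proof of Claim \ref{upper bound L}, now with the extra constraint of avoiding $V(C)$ and the previously built cycles.
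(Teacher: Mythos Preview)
Your proof is correct and follows essentially the same approach as the paper: assume a copy $C$ of $C_{2k+1}$ lies in $G[S]$, then use that each $v_i\in L$ has a link graph with $\Theta(n^2)$ edges to greedily extract pairwise disjoint copies of $P_{2k}$ avoiding $V(C)$ (and previously chosen vertices), which together with the $v_i$'s and $C$ yield a forbidden $(\ell+1)\cdot C_{2k+1}$. The only cosmetic difference is that the paper first passes to $G'=G\setminus V(C)$ and its triangle hypergraph $\mathcal{H}'$ before invoking Erd\H{o}s--Gallai, whereas you carry out the equivalent bookkeeping by explicit forbidden sets $F_i$; the two arguments are interchangeable.
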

\begin{proof}
Suppose on the contrary that there exists a copy of $C_{2k+1}$ in $G[S]$. Let $G' = G \setminus C_{2k+1}$ and let $\mathcal{H}'$ be a $3$-graph formed by triangles in $G'$. Clearly, we have $d_{\mathcal{H}'}(v) \geq \frac{n^2}{8|U|} - (2k+1)n=\Theta(n^2)$ for each $v\in L$. By Theorem \ref{59EG}, we can find a copy of $P_{2k}$ in $L_{\mathcal{H'}}(v)$ for each $v\in L$. Clearly, there is a choice such that this $\ell$ copies of $P_{2k}$ are pairwise disjoint as $e(L_{\mathcal{H'}}(v))=d_{\mathcal{H}'}(v)=\Theta(n^2)$ and $n$ is sufficiently large. Thus, we have a copy of $\ell \cdot P_{2k}$ in $G'$ as $L_{\mathcal{H'}}(v)\subset G'$  for each $v\in L$. This together with $L$ forms a copy of $\ell \cdot C_{2k+1}$ in $G'$. Thus, $G$ contains a copy of $(\ell+1)\cdot C_{2k+1}$,  a contradiction.
\end{proof}

By Theorem \ref{auxilary theorem} and Claim \ref{GS}, we obtain that $e(S) + t(G[S]) \leq \left\lfloor\frac{(n-\ell)^2}{4}\right\rfloor$. Thus
\begin{align*}
t(G) &\leq |L| \cdot e(S) + t(G[S]) + (n-\ell)\binom{\ell}{2} + \binom{\ell}{3}\\
&\leq (\ell-1)e(S) +  \left(e(S) + t(G[S])\right) + (n-\ell)\binom{\ell}{2}+ \binom{\ell}{3}\\
&\leq \ell\left\lfloor\frac{(n-\ell)^2}{4}\right\rfloor + (n-\ell)\binom{\ell}{2}+ \binom{\ell}{3},
\end{align*}
where the last inequality holds if and only if $G$ is isomorphic to $H$. This completes the proof of Theorem \ref{main-theorem}.
\end{proof}

$\textbf{Acknowledgements}$
The authors would like to thank Dr. Xizhi Liu for his fruitful discussions.

\bibliographystyle{abbrv}
\bibliography{CHBipartite}

\begin{thebibliography}{10}

\bibitem{AS16}
N.~Alon and C.~Shikhelman.
\newblock Many {$T$} copies in {$H$}-free graphs.
\newblock {\em J. Combin. Theory Ser. B}, 121:146--172, 2016.

\bibitem{BG08}
B.~Bollob\'{a}s and E.~Gy\H{o}ri.
\newblock Pentagons vs. triangles.
\newblock {\em Discrete Math.}, 308(19):4332--4336, 2008.

\bibitem{BS74}
J.~A. Bondy and M.~Simonovits.
\newblock Cycles of even length in graphs.
\newblock {\em J. Combinatorial Theory Ser. B}, 16:97--105, 1974.

\bibitem{BJ17}
B.~Bukh and Z.~Jiang.
\newblock A bound on the number of edges in graphs without an even cycle.
\newblock {\em Combin. Probab. Comput.}, 26(1):1--15, 2017.

\bibitem{Er62}
P.~Erd\H{o}s.
\newblock {E}xtremal problem in {G}raph theory.
\newblock {\em Arch. Math. (Basel)}, 13:222--227, 1962.

\bibitem{E62}
P.~Erd\H{o}s.
\newblock On the number of complete subgraphs contained in certain graphs.
\newblock {\em Magyar Tud. Akad. Mat. Kutat\'{o} Int. K\"{o}zl.}, 7:459--464,
  1962.

\bibitem{E84}
P.~Erd\H{o}s.
\newblock On some problems in graph theory, combinatorial analysis and
  combinatorial number theory.
\newblock In {\em Graph theory and combinatorics ({C}ambridge, 1983)}, pages
  1--17. Academic Press, London, 1984.

\bibitem{EG59}
P.~Erd\H{o}s and T.~Gallai.
\newblock On maximal paths and circuits of graphs.
\newblock {\em Acta Math. Acad. Sci. Hungar.}, 10:337--356 (unbound insert),
  1959.

\bibitem{FG15}
Z.~F\"{u}redi and D.~S. Gunderson.
\newblock Extremal numbers for odd cycles.
\newblock {\em Combin. Probab. Comput.}, 24(4):641--645, 2015.

\bibitem{FO17}
Z.~F\"{u}redi and L.~\"{O}zkahya.
\newblock On 3-uniform hypergraphs without a cycle of a given length.
\newblock {\em Discrete Appl. Math.}, 216:582--588, 2017.

\bibitem{G2023}
D.~Gerbner.
\newblock Generalized {T}ur\'{a}n problems for {$K_{2,t}$}.
\newblock {\em Electron. J. Combin.}, 30(1):Paper No. 1.34, 9, 2023.

\bibitem{GGMV20}
D.~Gerbner, E.~Gy\H{o}ri, A.~Methuku, and M.~Vizer.
\newblock Generalized {T}ur\'{a}n problems for even cycles.
\newblock {\em J. Combin. Theory Ser. B}, 145:169--213, 2020.

\bibitem{GMV19}
D.~Gerbner, A.~Methuku, and M.~Vizer.
\newblock Generalized {T}ur\'{a}n problems for disjoint copies of graphs.
\newblock {\em Discrete Math.}, 342(11):3130--3141, 2019.

\bibitem{G12}
A.~Grzesik.
\newblock On the maximum number of five-cycles in a triangle-free graph.
\newblock {\em J. Combin. Theory Ser. B}, 102(5):1061--1066, 2012.

\bibitem{GL13}
E.~Gy\H{o}ri and H.~Li.
\newblock The maximum number of triangles in {$C_{2k+1}$}-free graphs.
\newblock {\em Combin. Probab. Comput.}, 21(1-2):187--191, 2012.

\bibitem{HHK13}
H.~Hatami, J.~Hladk\'{y}, D.~Kr\'{a}\v{l}, S.~Norine, and A.~Razborov.
\newblock On the number of pentagons in triangle-free graphs.
\newblock {\em J. Combin. Theory Ser. A}, 120(3):722--732, 2013.

\bibitem{HHLYZ23}
J.~Hou, H.~Li, X.~Liu, L.-T. Yuan, and Y.~Zhang.
\newblock A step towards a general density {C}orr\'{a}di--{H}ajnal theorem.
\newblock {\em arXiv preprint arXiv:2302.09849}.

\bibitem{liu2023}
X.~Liu and J.~Song.
\newblock Exact results for some extremal problems on expansions i.
\newblock {\em arxiv preprint arXiv:2310.01736}.

\bibitem{MQ2020}
J.~Ma and Y.~Qiu.
\newblock Some sharp results on the generalized {T}ur\'{a}n numbers.
\newblock {\em European J. Combin.}, 84:103026, 16, 2020.

\bibitem{Moon68}
J.~W. Moon.
\newblock On independent complete subgraphs in a graph.
\newblock {\em Canadian J. Math.}, 20:95--102, 1968.

\bibitem{RS18}
A.~Roberts and A.~Scott.
\newblock Stability results for graphs with a critical edge.
\newblock {\em European J. Combin.}, 74:27--38, 2018.

\bibitem{41Turan}
P.~Tur{\'a}n.
\newblock On an extremal problem in graph theory.
\newblock {\em Mat. Fiz. Lapok}, 48:436--452, 1941.

\bibitem{ZCG23}
F.~Zhang, Y.~Chen, E.~Gyori, and X.~Zhu.
\newblock Maximum cliques in a graph without disjoint given subgraph.
\newblock {\em arxiv preprint arXiv:2309.09603}.

\bibitem{ZC22}
X.~Zhu and Y.~Chen.
\newblock Generalized {T}ur\'{a}n number for linear forests.
\newblock {\em Discrete Math.}, 345(10):Paper No. 112997, 12, 2022.

\bibitem{ZC23}
X.~Zhu, Y.~Chen, D.~Gerbner, E.~Gy\H{o}ri, and H.~H. Karim.
\newblock The maximum number of triangles in {$F_k$}-free graphs.
\newblock {\em European J. Combin.}, 114:Paper No. 103793, 19, 2023.

\end{thebibliography}
\end{document}